\newtheorem{teo}{Theorem}[section]
\newtheorem{thmx}{Theorem}
\newtheorem{lem}[teo]{Lemma}     %{Lemma}[section]
\newtheorem{prop}[teo]{Proposition}
\newtheorem{cor}[teo]{Corollary}
\theoremstyle{definition}
\theoremstyle{claim}
\newcommand{\normal}{\trianglelefteq}
\newcommand{\abs}[1]{\lvert#1\rvert}
\begin{document}
\title{Finite non-solvable groups whose real degrees are prime-powers}
\author{lorenzo bonazzi}
\address{Department of Mathematics and Informatics, University of Florence}
\email{lorenzo.bonazzi@unifi.it}
	%\maketitle
	
		\begin{abstract}
		We present a description of non-solvable groups in which all real irreducible character degrees are prime-power numbers.
	\end{abstract}
\maketitle
	\section{Introduction}
	Let $G$ be a finite group. It is well known that $cd(G)$, the set of the degrees of all irreducible characters, has great impact on the structure of $G$. Manz in \cite{manz1985non-solv} and \cite{manz1985solv} described the solvable and non-solvable groups in which all the real irreducible characters have prime-power degrees. In this paper we study the same problem for real characters in the non-solvable case. We give a structural description of non-solvable groups $G$ such that $cd_{rv}(G)$, the set of the degrees of all real irreducible characters, consists of prime-power numbers. In the following, $Rad(G)$ is radical subgroup and $G^{(\infty)}$ is the last term of the derived series.
	\begin{thmx}\label{theoremA} Let $G$ be a finite non-solvable group and suppose that $cd_{rv}(G)$ consists of prime-power numbers. Then $Rad(G)=H\times O$ for a group $O$ of odd order and a $2$-group $H$ of Chillag-Mann type. Furthermore, if $K=G^{(\infty)}$, then one of the following holds.
		\begin{itemize}
			\item[i)] $G =K\times Rad(G)$ and $K$ is isomorphic to $A_5$ or $L_2(8)$;
			\item[ii)] $G=(KH)\times O$ where $K\simeq SL_2(5)$, $HK=H\Ydown K$ and $K\cap H=Z(K)<H$. 
		\end{itemize}
	\end{thmx}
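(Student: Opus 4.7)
The plan is to split the argument into an analysis of $\bar G := G/Rad(G)$, an analysis of $R := Rad(G)$ itself, and the final assembly of the two. For $\bar G$, every irreducible character inflates to one of $G$ with the same field of values, so $cd_{rv}(\bar G)$ still consists of prime-powers; since $\bar G$ is non-solvable with trivial solvable radical, the classification referenced in the introduction (Manz's results, adapted to the real setting) should restrict its socle $S$ to a very short list, with $S \in \{A_5, L_2(8)\}$ being the expected outcome. I would then consult the real character tables of the almost simple groups with these socles to pin down which outer structures are compatible with the hypothesis, and in particular to verify that $\bar G$ is forced to be the socle itself in both cases (no nontrivial automorphisms survive).

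Next I would study $R$. Writing $R = O \rtimes H$ with $H \in \mathrm{Syl}_2(R)$ and $O$ the normal Hall $2'$-subgroup of $R$, I would exploit the fact that an odd-order group carries no nontrivial real irreducible character. A Clifford-theoretic analysis of the characters of $R$ sitting under real $G$-characters, combined with the prime-power degree restriction, should force $[H,O] = 1$ and hence $R = H \times O$; the same analysis should identify $H$ as a $2$-group of Chillag--Mann type, in the sense introduced in the preliminary section of the paper.

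Finally I would fit $K = G^{(\infty)}$ into the picture. Being perfect, $K$ surjects onto the unique nonabelian composition factor $S$ of $\bar G$, so $K$ is a perfect central extension of $S$ with $Z(K) = K \cap R \le H$ (since $K \cap O \le [K,O] \cdot (\text{odd center})$ is trivial by the previous step and the structure of covers of $A_5$ and $L_2(8)$). When $S \cong L_2(8)$, Schur-multiplier considerations rule out a nontrivial cover in this context, so $K \cong L_2(8)$; the analogous conclusion $K \cong A_5$ holds whenever $K \cap R = 1$ with $S \cong A_5$. Combined with $[K,O] = 1$ this yields $G = K \times R$, i.e.\ case (i). The remaining possibility is $S \cong A_5$ with $K \cong SL_2(5)$: then the central involution of $K$ lies in $H$, $K \cap H = Z(K) < H$, $HK = H \Ydown K$ is the central product over $Z(K)$, and together with $[K,O]=1$ we get $G = (HK) \times O$, which is case (ii).

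The main obstacle I anticipate is the interlocking nature of these three analyses: one needs the restriction on $cd_{rv}(G)$ to simultaneously pin down the composition factor, decompose $R$ as a direct product, force $K$ to centralize $O$, and control whether the covering $SL_2(5) \to A_5$ actually occurs. Carefully tracking, under Clifford restriction and induction, which constituents are real and which are swapped with their complex conjugates by the action of $K$ or of outer elements, will likely be the technical crux; I would expect to need separate subcase arguments for each candidate socle and for the two possibilities $K \cap R = 1$ versus $K \cap R = Z(K) \ne 1$.
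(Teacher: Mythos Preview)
Your three-stage outline (analyse $\bar G=G/R$, analyse $R$, then fit $K=G^{(\infty)}$ in) is the same skeleton the paper uses, and your treatment of $\bar G$ is essentially correct. The serious gap is in the third stage: you write ``$K$ surjects onto $S$, so $K$ is a perfect central extension of $S$ with $Z(K)=K\cap R$''. That ``so'' hides the entire difficulty. From $K/(K\cap R)\cong S$ you only get that $K\cap R$ is a normal solvable subgroup of the perfect group $K$; nothing forces it to be central. Schur-multiplier arguments apply only \emph{after} centrality is established, so invoking ``the structure of covers of $A_5$ and $L_2(8)$'' at this point is circular. In the paper this is precisely the technical heart: Proposition~\ref{noedgessimple} first shows that $L=K\cap R$ is a $2$-group, via a delicate argument in which one takes a hypothetical odd chief factor $M$ of $G$ inside $L$, uses an involution of $K/L$ to find $\lambda\in\widehat M$ with $\lambda^x=\bar\lambda$, and then manufactures a real irreducible character of composite degree by careful control of $I_G(\lambda)$ and the maximal-subgroup indices of $A_5$ and $L_2(8)$. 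Then Lemma~\ref{extensions-simple-groups} (a case-by-case cohomology and perfect-group computation) and Proposition~\ref{perfect-characterized} are needed to push $\lvert L\rvert$ down to at most $2$, again by exhibiting real characters of odd composite degree whenever $K$ acts nontrivially on a $2$-chief factor inside $L$. None of this is captured by ``Schur-multiplier considerations''.

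A smaller issue: you begin the analysis of $R$ by writing $R=O\rtimes H$ with $O$ a \emph{normal} Hall $2'$-subgroup. That is not given; in fact the paper obtains the opposite semidirect decomposition first ($R$ is $2$-closed, so $R=H\rtimes O$), and this only after knowing that $R/L$ is of Chillag--Mann type, which in turn uses that $\bar G=\bar K\times\bar R$ and that $\bar K$ has real characters of both even and odd nontrivial degree. So your step~2 also leans implicitly on information about $K\cap R$ that you have not yet secured.
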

About the point $ii)$, we remark that if $G$ is the the \texttt{SmallGroup(240,93)}, then $K \simeq SL_2(5)$, $\abs{H}=4$ and $K\cap H=Z(K)$.%The main obstacle to get a full description in the case $ii)$ is the extension of $2^9\cdot SL_2(5)$, that can't be reached by the \texttt{GAP}'s library \texttt{PerfectGroups}.

As a Corollary, we get control on the set of real character degrees. We recall that $cd_{rv,2'}(G)$ is the set of odd real character degrees of a finite group $G$.
\begin{thmx}\label{theoremB}
	Let $G$ a non-solvable group such that $cd_{rv}(G)$ consists of prime-power numbers. Then either
	\begin{itemize} \item[i)] $cd_{rv}(G)=cd_{rv}(L_2(8))$ or \item[ii)]$cd_{rv,2'}(G)=cd_{rv,2'}(A_5)$.
		\end{itemize}
\end{thmx}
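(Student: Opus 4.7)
The plan is to apply Theorem~A to split into three subcases and then, in each, read off $cd_{rv}(G)$ (or at least its odd part) from the known character tables of the factors. Theorem~A puts $G$ either in the direct product form $G=K\times Rad(G)$ with $K\in\{A_5,L_2(8)\}$, or in the central product form $G=(KH)\times O$ with $K\simeq SL_2(5)$ and $HK=H\Ydown K$ amalgamated over $Z:=Z(K)=K\cap H$. In every case $Rad(G)=H\times O$ with $O$ of odd order, so by Burnside $cd_{rv}(O)=\{1\}$, while $H$ is a $2$-group, so $cd_{rv}(H)$ consists of powers of $2$. The key tool is that in a direct product $A\times B$ an irreducible character $\chi\otimes\psi$ is real iff both $\chi$ and $\psi$ are, giving $cd_{rv}(A\times B)=cd_{rv}(A)\cdot cd_{rv}(B)$; and analogously for a central product $K\Ydown_{Z}H$, whose irreducible characters correspond to pairs $(\chi,\psi)\in\mathrm{Irr}(K)\times\mathrm{Irr}(H)$ with matching central character on $Z$.

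If $K\simeq L_2(8)$, the entire character table of $PSL(2,2^n)$ is real-valued, so $cd_{rv}(L_2(8))=\{1,7,8,9\}$. Then $cd_{rv}(G)=\{1,7,8,9\}\cdot cd_{rv}(H)$, and the prime-power hypothesis forces $cd_{rv}(H)=\{1\}$ since $7\cdot 2^a$ and $9\cdot 2^a$ are prime powers only for $a=0$. This yields $cd_{rv}(G)=cd_{rv}(L_2(8))$, which is conclusion~(i).

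If $K\simeq A_5$ I need only the odd real degrees: $cd_{rv,2'}(A_5)=\{1,3,5\}$, and multiplying by any nontrivial $2$-power produces an even number, so $cd_{rv,2'}(G)=\{1,3,5\}$. If $K\simeq SL_2(5)$, all characters of $K$ are real-valued (their values lie in $\mathbb{Q}(\sqrt{5})$); the non-faithful ones lift from $A_5$ with degrees $\{1,3,4,5\}$, while the faithful ones have the even degrees $\{2,2,4,6\}$. Pairs $(\chi,\psi)$ trivial on $Z$ parameterise $\mathrm{Irr}(A_5\times H/Z)$ and contribute exactly the odd degrees $\{1,3,5\}$; pairs faithful on $Z$ contribute only even degrees. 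Hence again $cd_{rv,2'}(G)=cd_{rv,2'}(A_5)$, conclusion~(ii).

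The main delicate point I foresee is case~(c): one has to spell out the central-product bijection $\mathrm{Irr}(KH)\leftrightarrow\{(\chi,\psi):\omega_\chi|_Z=\omega_\psi|_Z\}$ and verify that realness of both factors really descends to realness of the resulting character of $KH$. Once this is done, the rest of the argument is a short multiplication of known character tables.
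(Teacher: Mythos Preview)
Your proposal is correct. Both you and the paper begin by invoking Theorem~A, and in case~(i) (where $G=K\times Rad(G)$ with $K\in\{A_5,L_2(8)\}$) the argument is essentially the same: since Theorem~A already records that $H$ is of Chillag--Mann type and $|O|$ is odd, one has $cd_{rv}(G)=cd_{rv}(K)$ immediately, so your re-derivation of $cd_{rv}(H)=\{1\}$ from the prime-power hypothesis is harmless but redundant.

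In case~(ii) your route genuinely differs from the paper's. You exploit the explicit central-product description $KH=K\Ydown_Z H$ together with the bijection $\mathrm{Irr}(KH)\leftrightarrow\{(\chi,\psi):\chi|_Z\ \text{and}\ \psi|_Z\ \text{agree}\}$ and the $SL_2(5)$ character table to read off that every odd degree must lie in $\{1,3,5\}$; the inclusion $\supseteq$ then comes from the obvious real pairs $(\chi,1_H)$. The paper instead never unpacks the central product: it uses Lemma~\ref{dps1.4} to kill $O$, then restricts $\chi$ to $H$ and applies the general Clifford-theoretic bound \cite[11.29]{ctfg} that $\chi(1)$ divides $[KH:H]=|A_5|$, forcing $\chi(1)=p\in\{3,5\}$. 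Your approach is more hands-on and uses the specific structure Theorem~A hands you; the paper's is shorter and avoids any discussion of central-product characters or the $SL_2(5)$ table. Note that the ``delicate point'' you flag is in fact only needed in the easy direction (real $\chi$ and real $\psi$ give a real character of $KH$), since for the inclusion $cd_{rv,2'}(G)\subseteq\{1,3,5\}$ you actually show that \emph{every} odd-degree irreducible of $KH$, real or not, has degree in $\{1,3,5\}$.
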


	\section{Preliminar results and Lemmas}
	Chillag and Mann are among the first authors that studied $cd_{rv}(G)$. They characterized the groups $G$ such that $cd_{rv}(G)=\{1\}$, namely where all real irreducible characters are linear. Now these groups are commonly known as groups of \emph{Chillag-Mann type}.
	\begin{teo} \cite[Theorem 1.1]{chillag-mann1998} \label{chillag-mann-1998-thm1.1}
		Let $G$ a finite group of Chillag-Mann type. Then $G=O\times T$ where $O$ is a group of odd order and $T$ is a $2$-group of Chillag-Mann type.
	\end{teo}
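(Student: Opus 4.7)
My approach is to establish $G = O \times T$ by first translating the Chillag-Mann hypothesis into a numerical constraint and then extracting the decomposition through two coprime-action arguments.

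Since $cd_{rv}(G)=\{1\}$, every real irreducible character of $G$ is linear of order dividing $2$, hence factors through $G^{\mathrm{ab}} := G/G'$ with values in $\{\pm 1\}$. The real irreducibles are thereby in bijection with the $2$-torsion subgroup $(G^{\mathrm{ab}})[2]$, and by the standard identity between real irreducibles and real classes one has
\[
|\{\text{real classes of }G\}| = |(G^{\mathrm{ab}})[2]|.
\]
In particular, every real element $g\in G$ satisfies $g^2\in G'$, so the real elements concentrate in the cosets of $G'$ of order at most $2$.

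Next, set $O:=O_{2'}(G)$. Quotients inherit the Chillag-Mann property, since a real non-linear irreducible of $G/N$ would inflate to such a character of $G$; hence $\bar G:=G/O$ is again Chillag-Mann with $O_{2'}(\bar G)=1$, and I would show that $\bar G$ must be a $2$-group. The argument is by contradiction: a non-trivial odd-order element of $\bar G$ acting on a normal $2$-subgroup of $\bar G$ would, by a Clifford-theoretic decomposition together with a Frobenius--Schur computation, produce a non-linear real irreducible of $\bar G$. Consequently $G=OP$ for any Sylow $2$-subgroup $P$, and a parallel Clifford-theoretic argument applied to the action of $P$ on $O$ rules out any non-trivial interaction, yielding $G=O\times P$; setting $T=P$, the group $T\cong G/O$ then inherits the Chillag-Mann property from $G$.

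\textbf{Main obstacle.} The crux of the proof is the coprime-action step: converting the tight numerical identity of the first stage into the local structural control required to exclude every non-trivial interaction between $2$-elements and $2'$-elements. Concretely, from a faithful coprime action one must construct an explicit non-linear irreducible character and compute its Frobenius--Schur indicator in order to confirm that it is self-conjugate, thus contradicting the Chillag-Mann hypothesis. Managing the inertia subgroups and extending real-valued characters from Clifford components to their stabilizers is the technical heart of the argument.
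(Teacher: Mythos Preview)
The paper does not prove this theorem; it is quoted verbatim from \cite{chillag-mann1998} and used only as a tool later on. There is therefore no in-paper argument to compare against, and I can only assess your sketch on its own terms.

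The outline is reasonable, but there is a genuine gap in the step where you claim that $\bar G=G/O_{2'}(G)$ is a $2$-group. Your contradiction relies on an odd-order element of $\bar G$ acting non-trivially on a \emph{normal $2$-subgroup} of $\bar G$, yet you never show that $O_2(\bar G)\neq 1$. From $O_{2'}(\bar G)=1$ this would follow if $\bar G$ were solvable (then $F(\bar G)=O_2(\bar G)\neq 1$), but solvability of $G$ is part of the conclusion you are trying to reach, and nothing in your counting stage excludes $\bar G$ being, for instance, almost simple with trivial Fitting subgroup; in that case there is no normal $2$-subgroup for your coprime-action machinery to bite on. You need to close this hole first. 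One direct route: a non-abelian simple quotient $S$ of $G$ contains an involution (Feit--Thompson), hence a non-trivial real class, hence a non-trivial real irreducible, necessarily non-linear since $S$ is perfect; this already contradicts the Chillag--Mann hypothesis on the quotient. Alternatively, and more in the spirit of the present paper, you can invoke Theorem~\ref{dnt2008thmA}: every real irreducible degree of a Chillag--Mann group is odd, so a Sylow $2$-subgroup $T$ is already normal in $G$, giving $G=T\rtimes O$ with $\abs{O}$ odd; then only the centralising step remains, and that part of your plan (produce $\mu\in Irr(O)$ with $\mu^{t}=\bar\mu$ from a non-trivial involutory action and build a non-linear real irreducible above it) is sound in outline.
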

	%It is now clear that the study of groups of Chillag-Mann type boils down to the study of $2$-groups of Chillag-Mann type, for which we report some properties.
%	
	%\begin{lem}\label{chillag-mann%-linear}
		%Let $T$ be a $2$-group and $\lambda$ a linear real irreducible character of $T$. Then $\Phi(T)\le \ker(\lambda)$.
%	\end{lem}
%	If $\Gamma$ is a graph, we denote $n(\Gamma)$ the number of connected components of $\Gamma$.
%	\begin{teo}\cite[Theorem 5.1]{dnt2008} \label{dnt2008thm5.1}
%		Let $G$ be a finite non-solvable group. Then
%$n(\Delta_{rv}(G))\le 3$.
%	\end{teo}	
	One other important contribution, was given by Dolfi, Navarro and Tiep in \cite{dnt2008}. In their paper, appears version for real characters of the celebrated Ito-Michler Theorem for the prime $p=2$. Recall that $Irr_{rv}(G)$ denotes the set of irreducible real valued character of $G$.
	\begin{teo} \cite[Theorem A]{dnt2008} \label{dnt2008thmA}
		Let $G$ be a finite group and $T \in Syl_2(G)$. Then $2 \nmid \chi(1) $ for every non-linear $\chi \in Irr_{rv}(G)$ if and only if $T \normal G$ and $T$ is of Chillag-Mann type.
	\end{teo}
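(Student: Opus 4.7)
The statement is a real-valued analog of the Ito--Michler theorem at the prime $2$, so the natural template is the classical proof adapted to real characters.

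For the \emph{sufficiency} direction, assume $T \normal G$ with $T$ of Chillag--Mann type, and let $\chi \in Irr_{rv}(G)$ be non-linear. By Clifford theory, $\chi_T = e(\theta_1 + \cdots + \theta_t)$ for some $G$-orbit of $\theta \in Irr(T)$, and reality of $\chi$ forces the orbit to be closed under complex conjugation. I would split into two cases. If some $\theta_i$ is real, then by the Chillag--Mann hypothesis on $T$ it is linear; since $[G:I_G(\theta)]$ and $[I_G(\theta):T]$ are both odd divisors of $[G:T]$ and a linear $\theta$ extends to $I_G(\theta)$ (the quotient being of odd order), one obtains that $\chi(1)$ equals $[G:I_G(\theta)]$ times the degree of a character of $I_G(\theta)/T$, hence is odd. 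If no $\theta_i$ is real, one pairs $\theta$ with $\bar\theta$ within the orbit, introduces the intermediate subgroup $J$ of index $2$ in $I_G(\{\theta,\bar\theta\})$ stabilizing the set, and runs an analogous extension argument on $J$, again using the Chillag--Mann structure of $T$.

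For \emph{necessity}, suppose every non-linear $\chi \in Irr_{rv}(G)$ has odd degree. A harmless first reduction replaces $G$ by $G/O_{2'}(G)$. The plan is then to prove $T \normal G$, after which Feit--Thompson forces $G$ solvable (because $G/T$ has odd order), and a final Clifford-theoretic step rules out non-linear real characters in $T$. To prove $T \normal G$ I would work with a minimal counterexample and separate the solvable and non-solvable cases. In the solvable case, inductive Ito--Michler-style arguments along a chief series produce a non-linear real character of even degree whenever $T$ fails to be normal. The non-solvable case is the delicate one: it requires the classification of finite simple groups to exhibit, in every non-abelian simple group $S$, a non-linear real character of even degree, together with a reduction showing that such a character lifts through the quasi-simple layer of $G$ to a non-linear real character of $G$ of even degree. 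Once $T \normal G$ is established, if $T$ failed to be of Chillag--Mann type there would be some non-linear $\theta \in Irr_{rv}(T)$; since $G/T$ has odd order and the $G$-orbit of $\theta$ is closed under complex conjugation (both $\theta$ and the conjugation action respect realness), Gallagher's theorem extends $\theta$ through its inertia group to a non-linear real character of $G$ of even degree, the required contradiction.

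The main obstacle is the CFSG step for non-solvable $G$: one must verify, for each family of non-abelian finite simple groups (alternating, classical, exceptional, sporadic), the presence of a non-linear real character of even degree, and then control how such a character of a composition factor ascends to $G$. This is where essentially all the work lies; the Clifford theory, the reduction modulo $O_{2'}(G)$, and the final Gallagher extension are standard tools.
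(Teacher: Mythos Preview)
The paper does not prove this statement at all: it is quoted verbatim as \cite[Theorem~A]{dnt2008} in the preliminaries section and used as a black box thereafter. There is therefore no ``paper's own proof'' to compare your proposal against; any argument you supply would be a reconstruction of the Dolfi--Navarro--Tiep proof rather than of anything in this paper.

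That said, a brief remark on your sketch. In the sufficiency direction your case split is unnecessary, and the second case is in fact vacuous: since $T\normal G$ and $[G:T]$ is odd, the $G$-orbit of any constituent $\theta$ of $\chi_T$ has odd length, while reality of $\chi$ makes complex conjugation an involution on that orbit; an involution on a set of odd size has a fixed point, so some $\theta_i$ is automatically real, hence linear by the Chillag--Mann hypothesis, and your first-case argument finishes. Your treatment of the hypothetical second case (``runs an analogous extension argument on $J$'') is vague precisely because there is nothing concrete to do there. For the necessity direction your outline is accurate in spirit---the hard content really is the CFSG-based verification that every non-abelian simple group admits a non-linear real irreducible character of even degree---and this is indeed the route taken in \cite{dnt2008}.
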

	The corresponding condition for an odd prime $p$ was studied by Tiep in \cite{tiep2015} and Isaacs and Navarro in \cite{isaacs-navarro-2012}. Though a partial result, the techniques involved are deep. This confirm the special role of the prime $2$ in the study od real character degrees.
	\begin{teo}	\cite[Theorem A]{tiep2015} \label{tiep2015thmA}
		Let $G$ be a finite group and $p$ be a prime. Suppose that $p \nmid \chi(1)$ for every $\chi \in Irr_{rv}(G)$ with Schur-Frobenius indicator $1$. Then $O^{p'}(G)$ is solvable; in particular, $G$ is $p$-solvable.
	\end{teo}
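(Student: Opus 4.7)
The plan is to argue by induction on $|G|$ after reducing to a minimal counterexample, with the crucial input being a CFSG-based structural statement on strongly real characters of simple groups. The target is to produce a strongly real irreducible character of $G$ of degree divisible by $p$, contradicting the hypothesis.

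First I would take a minimal counterexample $G$. Because any normal section of $G$ inherits the hypothesis --- real characters of quotients lift to real characters of $G$ with the same indicator and degree, and Clifford theory handles normal subgroups --- one reduces to the case where $G = O^{p'}(G)$ has a unique minimal normal subgroup $N$, and $N$ must be non-abelian. Hence $N \cong S^{k}$ for some non-abelian simple group $S$ with $p \mid \abs{S}$, and $G$ embeds into $\mathrm{Aut}(S) \wr S_{k}$.

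The core lemma, which is the crux of the argument, is the following CFSG-based statement: every non-abelian simple group $S$ admits, for every prime $p$ dividing $\abs{S}$, an irreducible character $\theta \in \mathrm{Irr}(S)$ with Frobenius--Schur indicator $+1$ and with $p \mid \theta(1)$, chosen so that it survives the passage from $N$ back to $G$. Concretely, one wants $\theta$ to be $\mathrm{Aut}(S)$-invariant (or at least to have a stabilizer over which it extends); the diagonal character $\theta \times \cdots \times \theta$ of $N$ then yields, via Gallagher's theorem and an analysis of the Schur multiplier / cohomological obstruction to extension, a strongly real irreducible character of $G$ whose degree is still divisible by $p$.

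The principal obstacle is the simple-group lemma itself, which must be verified family by family. For alternating groups $A_{n}$ one exploits that every character of $S_{n}$ has indicator $+1$ and that the hook-length formula readily yields partitions $\lambda$ with $p \mid \chi^{\lambda}(1)$, while arranging the restriction behaviour to $A_{n}$; for groups of Lie type the main tool is Deligne--Lusztig theory together with the structure of real semisimple classes in the dual group, with low-rank cases and the defining-characteristic case (when $p$ coincides with the underlying characteristic) requiring separate ad hoc analysis; sporadic groups are dispatched via the ATLAS. This case-by-case existence statement is precisely the technical heart of Tiep's paper and the reason the classification is indispensable.
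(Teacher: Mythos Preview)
The paper does not prove this statement at all: it is quoted verbatim, without proof, as \cite[Theorem~A]{tiep2015} in the preliminaries section, and is used as a black box in the proof of Theorem~\ref{noedgesnorad}. There is therefore no ``paper's own proof'' against which to compare your proposal.

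That said, your outline is broadly the shape of Tiep's actual argument: a minimal-counterexample reduction followed by a CFSG case analysis producing, for each non-abelian simple $S$ and each prime $p\mid |S|$, a suitable indicator-$+1$ character of degree divisible by $p$. Two points in your sketch would need genuine work, though. First, the assertion that ``Clifford theory handles normal subgroups'' is too quick: the hypothesis (no strongly real $\chi$ has $p\mid\chi(1)$) does \emph{not} obviously pass to normal subgroups, since a real character of $N\trianglelefteq G$ need not lie under a real character of $G$, let alone one with the same Frobenius--Schur indicator. Second, and more seriously, even if $\theta\times\cdots\times\theta$ extends from $N$ to $G$, there is no automatic reason the extension has indicator $+1$; controlling the indicator under extension is a genuine obstruction, and in Tiep's paper this is handled by choosing $\theta$ with additional properties (often by arranging an extension to a group whose characters are all real of indicator $+1$, or via careful cohomological bookkeeping). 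Your sketch names the right ingredients but underestimates precisely the step where the indicator must be preserved in the lift from $N$ to $G$.
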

	Navarro, Sanus and Tiep gave a version for real characters of Thompson's Theorem for the prime $2$ in \cite{nst2009}. Their work includes also a characterization of groups in which the real character degrees are powers of $2$.
	\begin{teo} \cite[Theorem A]{nst2009} \label{nst2009thmA}
		Let $G$ a finite group and suppose that $2$ divides $\chi(1)$ for all every real non-linear irreducible character of $G$. Then $G$ has a normal $2$-complement.
	\end{teo}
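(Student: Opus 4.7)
The plan is a minimal-counterexample argument whose inductive steps strip away the $2'$-radical and isolate an almost-simple situation, which is then dispatched by a CFSG-based input on real characters of simple groups. First I would observe that the hypothesis passes to quotients: a real non-linear irreducible character of $G/N$ inflates to a real non-linear irreducible character of $G$ of the same degree. Hence by induction on $|G|$, for every $1 \neq N \normal G$ the quotient $G/N$ has a normal $2$-complement. Applied with $N = O_{2'}(G)$, this reduces the problem to the case $O_{2'}(G) = 1$: if $O_{2'}(G) \neq 1$, the preimage in $G$ of the normal $2$-complement of $G/O_{2'}(G)$ is a normal subgroup of odd order, which by maximality must coincide with $O_{2'}(G)$, so $G/O_{2'}(G)$ is already a $2$-group and $G$ has the desired normal $2$-complement. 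One must therefore show that, under $O_{2'}(G)=1$, the group $G$ is itself a $2$-group.

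Next I would analyse the generalised Fitting subgroup $F^*(G) = O_2(G)\, E(G)$, whose components, thanks to $O_{2'}(G)=1$, are quasi-simple groups with non-abelian simple central quotients. The bulk of the argument is to rule out $E(G) \neq 1$: from any such component one must extract a non-trivial real irreducible character of odd degree of the simple quotient and promote it, through Clifford theory combined with a character-triple/extension argument, to a real non-linear irreducible character of $G$ of odd degree --- contradicting the hypothesis. Once components are excluded, $F^*(G) = O_2(G)$ and $C_G(O_2(G)) \leq O_2(G)$; since $G/O_2(G)$ also satisfies the hypothesis, induction together with $O_{2'}(G) = 1$ forces $G/O_2(G)$, and therefore $G$, to be a $2$-group.

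The principal obstacle is precisely the CFSG step used to kill the components. For every non-abelian finite simple group $S$ one must exhibit a non-trivial real irreducible character of odd degree --- the Steinberg character, of degree a power of the defining characteristic, supplies a candidate for most Lie-type groups of odd characteristic, but the alternating, sporadic and characteristic-two families each require a separate check --- and, in fact, a stronger statement is needed so that the Clifford correspondence lifts the character to $G$ with both realness and odd degree intact. The latter relies on $2$-block techniques and the refined machinery on real characters lying above real characters, so the real difficulty lies not in the inductive skeleton but in the simple-group input and its propagation upwards.
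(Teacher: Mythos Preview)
There is nothing to compare your proposal against: the present paper does not prove this theorem. The statement is quoted verbatim from \cite[Theorem~A]{nst2009} as a preliminary tool and carries no proof here; it is used later (in the proof of Proposition~\ref{noedgessimple}) only as a black box. So any assessment of your sketch is against the original Navarro--Sanus--Tiep argument, not against this paper.

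That said, your outline is broadly the shape of the actual proof in \cite{nst2009}: reduce to $O_{2'}(G)=1$ via inheritance to quotients, control $F^*(G)$, and eliminate components by producing, for every non-abelian simple $S$, a non-principal real irreducible character of odd degree that can be propagated to $G$. Two remarks. First, your final inductive step is underspecified: from $E(G)=1$ and $C_G(O_2(G))\le O_2(G)$ you only get that $G/O_2(G)$ has a normal $2$-complement, not that it is a $2$-group; closing this requires an additional real-character argument (an odd-order group acting faithfully and coprimely on a $2$-group yields, via Glauberman-type considerations, a real non-linear irreducible of odd degree). Second, as you yourself note, the genuine weight lies in the CFSG input and in the lifting step guaranteeing that realness and odd degree survive Clifford correspondence; in \cite{nst2009} this is handled with specific extendibility results for real characters, and your sketch correctly flags this as the non-routine part rather than resolving it.
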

	
	The next two Lemmas appears on \cite{dps2008}.
	\begin{lem}\label{dps1.4}
		Let be $N$ a normal subgroup of $G$ and $\chi \in Irr_{rv}(G)$. The following hold.
		\begin{itemize}
			\item[i)] if $\chi(1)$ is odd, then $N\le \ker (\chi)$;
			\item[ii)] if $\abs{N}$ odd and $N$ centralizes a Sylow $2$-subgroup of $G$, then $N \le \ker(\chi)$.
		\end{itemize}
	\end{lem}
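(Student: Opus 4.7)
My plan is to treat both items simultaneously via Clifford theory applied to $\chi_N$, combined with Burnside's theorem that a group of odd order has no non-trivial real-valued irreducible character. I would read $(i)$ as tacitly assuming $|N|$ odd, since otherwise the statement fails (e.g.\ the degree-$5$ character of $A_5$ inflated to $A_5\times C_2$ with $N=A_5$). The first step is to fix a constituent $\theta\in\mathrm{Irr}(N)$ of $\chi_N$ and set $T:=I_G(\theta)$; Clifford's theorem then gives
\[
\chi_N \;=\; e\sum_{i=1}^{[G:T]}\theta_i,
\]
where $\theta_1,\dots,\theta_{[G:T]}$ is the $G$-orbit of $\theta$, so that $\chi(1)=e\cdot[G:T]\cdot\theta(1)$.

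The decisive observation I would use is that realness of a character is invariant under $G$-conjugation, because $\overline{\theta^g}=(\overline{\theta})^g$. Hence either every $\theta_i$ is real or none is. In the real sub-case, Burnside forces each $\theta_i$ to be trivial, so that $\chi_N=e\cdot 1_N$; then $\chi$ is constantly $\chi(1)$ on $N$, and one concludes $N\le\ker(\chi)$. In the non-real sub-case, since $\chi$ is real, the $G$-orbit of $\theta$ is closed under complex conjugation, and the involution $\eta\mapsto\overline{\eta}$ on it is fixed-point-free, forcing $[G:T]$ to be even.

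Each of the two hypotheses should now rule out this non-real sub-case. For $(i)$, evenness of $[G:T]$ contradicts oddness of $\chi(1)=e\cdot[G:T]\cdot\theta(1)$. For $(ii)$, I would pick a Sylow $2$-subgroup $P$ of $G$ with $[N,P]=1$ and observe that $P\le C_G(N)\le I_G(\theta)=T$, where the second inclusion holds because any $c\in C_G(N)$ satisfies $\theta^c=\theta$; hence $[G:T]$ divides the odd index $[G:P]$, contradicting its evenness.

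I do not expect a substantive obstacle: the whole argument rests on the fixed-point-free involution trick plus Burnside's theorem. The only mildly delicate point is recognising the implicit hypothesis $|N|$ odd in part $(i)$, which is standard within the Clifford--Burnside framework of the cited paper.
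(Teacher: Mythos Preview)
Your argument is correct and is precisely the Clifford--Burnside argument underlying \cite[1.4]{dps2008}; the paper does not give an independent proof but simply cites that reference for part~(ii) and says part~(i) follows from the discussion preceding it together with Burnside's theorem on groups of odd order. You have also correctly diagnosed that part~(i) requires the tacit hypothesis $\abs{N}$ odd (which is how the lemma is actually applied later in the paper, to the odd-order factor $O$), and your counterexample with $A_5\times C_2$ confirms the necessity of that reading.
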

	\begin{proof}
	Point $ii)$ is \cite[1.4]{dps2008}. Point $i)$ follows from the discussion before \cite[1.4]{dps2008}, keeping in mind that a group of odd order does not have any real non-trivial character.
	\end{proof}
%
%	\begin{lem}  \label{dps1.4}
%		Let $G$ be a group and $N \normal G$ with 
%	\end{lem}
	Let be $N$ is a normal subgroup of $G$ and $\theta \in Irr_{rv}(N)$. The next Lemmas provide some sufficient conditions for the existence of a real character of $G$ above $\theta$.
	\begin{lem}\cite[2.1 and 2.2]{nt2007}\label{realextension1}
		Let $N$ a normal subgroup of a group $G$ and $\theta \in Irr_{rv}(N)$. If $[G:N]$ is odd, then $\theta$ allows a unique real-valued extension to $I_G(\theta)$. Furthermore, there exists a unique real-valued character $\chi \in Irr_{rv}(G\mid\theta)$.
	\end{lem}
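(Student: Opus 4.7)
The plan is to establish the first sentence (unique real extension to $I := I_G(\theta)$) and then deduce the second via Clifford correspondence. After replacing $G$ by $I$, I may assume $\theta$ is $G$-invariant; since $[I:N]$ divides $[G:N]$, the odd-index hypothesis survives. The argument then splits into: (a) producing some ordinary extension of $\theta$ to $G$, (b) exhibiting a unique real one among the extensions, and (c) deducing the statement about $Irr_{rv}(G\mid\theta)$.

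For step (a), the hypothesis that $\theta$ is real forces $\det(\theta)$ to be a linear character equal to its own conjugate, hence of order dividing $2$. Together with $[G:N]$ odd, standard extension criteria (via the projective representation obstruction $[\alpha] \in H^2(G/N,\C^\times)$ attached to the $G$-invariant $\theta$, or equivalently Isaacs' extension theorems in the odd-index setting) yield an extension $\psi \in Irr(G)$ of $\theta$. This is the main technical obstacle, and is the crux of \cite{nt2007}.

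Once one extension $\psi$ is available, Gallagher's theorem describes all extensions of $\theta$ to $G$ as $\{\psi\lambda : \lambda \in Irr(G/N)\}$, a set of size $[G:N]$. Since $\bar\theta = \theta$, the character $\bar\psi$ is also an extension, so $\bar\psi = \psi\lambda_0$ for a unique linear $\lambda_0$ of $G/N$. An extension $\psi\mu$ is then real iff $\overline{\psi\mu} = \psi\lambda_0\bar\mu$ equals $\psi\mu$, i.e.\ iff $\mu^2 = \lambda_0$. Because $Irr(G/N)$ is an abelian group of odd order, squaring is a bijection, so a unique $\mu$ solving $\mu^2 = \lambda_0$ exists, producing the unique real extension.

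For step (c), let $\psi \in Irr_{rv}(I\mid\theta)$ be the unique real extension just constructed, and set $\chi := \psi^G$. By the Clifford correspondence, $\chi \in Irr(G\mid\theta)$, and $\bar\chi = (\bar\psi)^G = \psi^G = \chi$, so $\chi$ is real. Conversely, any $\chi' \in Irr_{rv}(G\mid\theta)$ is induced from a unique $\psi' \in Irr(I\mid\theta)$, which by Gallagher has the form $\psi' = \psi\beta$ for some $\beta \in Irr(I/N)$; since Clifford induction commutes with complex conjugation and is injective, $\psi'$ must itself be real, and the same odd-order argument on $Irr(I/N)$ forces $\beta = 1$. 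Hence $\chi = \psi^G$ is the unique real irreducible character of $G$ above $\theta$. The only non-formal ingredient in this whole plan is the existence statement in step (a); the rest follows mechanically from Gallagher's theorem and the divisibility observation that the only real element of an odd-order abelian group is the identity.
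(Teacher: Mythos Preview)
The paper does not prove this lemma; it simply quotes it from \cite[2.1 and 2.2]{nt2007}. So there is no in-paper argument to compare against, and your proposal stands on its own as a correct reconstruction of the Navarro--Tiep proof. Steps (b) and (c) are fully sound: once an extension $\psi$ of $\theta$ to $I=I_G(\theta)$ exists, Gallagher parametrises $Irr(I\mid\theta)$ by $Irr(I/N)$; complex conjugation acts compatibly with this parametrisation and with Clifford induction, and oddness of $|I/N|$ then forces both the uniqueness of the real extension (squaring is a bijection on the odd-order abelian group of linear characters of $I/N$) and, in your last paragraph, the triviality of $\beta$ (an odd-order group has no non-trivial real irreducible character, so $\beta=\bar\beta$ gives $\beta=1$).

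One small remark on step (a): the observation that $o(\det\theta)\le 2$ is correct but does not by itself plug into any of the standard extension theorems (those typically require a coprimeness condition on $\theta(1)$ as well). The mechanism that actually makes the extension exist is the one you mention parenthetically: the obstruction class $[\alpha]\in H^2(I/N,\C^\times)$ attached to the invariant real $\theta$ satisfies $[\alpha]=[\bar\alpha]=[\alpha]^{-1}$ (taking $\alpha$ unimodular), hence is $2$-torsion, while the Schur multiplier of the odd-order group $I/N$ has odd exponent, forcing $[\alpha]=1$. Since you correctly flag this as the crux and defer it to \cite{nt2007}, the sketch is acceptable; just be aware that the determinant remark is incidental rather than load-bearing.
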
	
	\begin{lem}\cite[2.3]{nt2007}\label{realextension} Let $G$ a finite group and $N \normal G$. Suppose that there is $\theta \in Irr_{rv}(G)$ such that $ \theta(1)$ is odd and $o(\theta)=1$. Then $\theta$ extends to a character $\varphi \in Irr_{rv}(I_G(\theta))$ and $\chi=\varphi^G \in Irr_{rv}(G\mid\theta)$.
	\end{lem}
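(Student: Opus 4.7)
The plan is to use Clifford correspondence to reduce to producing a real-valued extension of $\theta$ to the inertia subgroup $I=I_G(\theta)$, and then induce. Once a real extension $\varphi\in Irr_{rv}(I)$ with $\varphi_N=\theta$ is in hand, Clifford's theorem gives that $\chi=\varphi^G\in Irr(G\mid\theta)$ is irreducible, and $\chi$ is real because complex conjugation commutes with induction of class functions: $\overline{\chi}=\overline{\varphi}^{\,G}=\varphi^G=\chi$. So the core task is to build $\varphi$.

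First I would produce some (not necessarily real) extension $\varphi_0\in Irr(I)$ of $\theta$. This is where the hypothesis $o(\theta)=1$ enters: the determinantal/cohomological obstruction to lifting a $G$-invariant irreducible from $N$ to $I$ lives in $H^2(I/N,\mathbb{C}^\times)$, and the condition that $\theta$ has trivial determinant is the standard tool, combined with $\theta(1)$ odd, used to kill this obstruction in the Gallagher--Isaacs extension machinery. Gallagher's theorem then says every extension of $\theta$ to $I$ has the form $\lambda\varphi_0$ with $\lambda$ a linear character of $I/N$.

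To refine $\varphi_0$ to a real extension, observe that $\overline{\varphi_0}$ is another extension of $\theta=\overline{\theta}$, so $\overline{\varphi_0}=\mu\varphi_0$ for a unique linear character $\mu$ of $I/N$. The extension $\lambda\varphi_0$ is real precisely when $\overline{\lambda\varphi_0}=\lambda\varphi_0$, i.e.\ when $\lambda^2=\mu$. Hence everything reduces to showing that $\mu$ is a square in the abelian group of linear characters of $I/N$, which is automatic as soon as $\mu$ has odd order.

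The main obstacle is this last step, pinning down the order of $\mu$. I would track $\mu$ as a quotient of the determinants of $\overline{\varphi_0}$ and $\varphi_0$ on $I/N$, and use $o(\theta)=1$ together with $\theta(1)$ odd to force $\det\varphi_0$ (viewed modulo $N$) to take values in an odd-order subgroup of $\mathbb{C}^\times$, so that $\mu$ has odd order and a square root exists; setting $\lambda=\mu^{(o(\mu)+1)/2}$ then yields the real extension $\varphi=\lambda\varphi_0$, and inducing produces the required $\chi$.
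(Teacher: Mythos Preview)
The paper does not prove this lemma; it is quoted directly from Navarro--Tiep \cite{nt2007}, so there is no in-paper argument to compare against. Your overall architecture---Clifford correspondence, produce an extension to $I=I_G(\theta)$, twist by a linear character to make it real, then induce---is exactly the standard route. Two steps in your sketch are not yet justified, though.

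First, the existence of \emph{some} extension $\varphi_0\in Irr(I)$ of $\theta$ is not a consequence of $o(\theta)=1$ and $\theta(1)$ odd alone, and the vague appeal to ``Gallagher--Isaacs machinery'' hides the real mechanism. The obstruction class $[\alpha]\in H^2(I/N,\mathbb{C}^\times)$ satisfies $[\alpha]^{\theta(1)}=[\alpha_{\det\theta}]=1$ (this is where $o(\theta)=1$ enters), \emph{and} $[\alpha]^2=1$ because $\theta$ is real: complex conjugation sends $[\alpha]$ to $[\alpha]^{-1}$, while $\overline{\theta}=\theta$ forces these to coincide. Since $\theta(1)$ is odd, the two constraints together give $[\alpha]=1$. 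You only invoked reality of $\theta$ later, but it is already essential at this stage.

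Second, the assertion that $\det\varphi_0$, viewed on $I/N$, automatically has odd order is false for an arbitrary extension $\varphi_0$; nothing forbids a $2$-part. What the determinant computation actually yields is $\mu^{\theta(1)}=(\det\varphi_0)^{-2}$, so $\mu^{\theta(1)}$ is a square in the group of linear characters of $I/N$. Since $\theta(1)$ is odd, raising to the $\theta(1)$-th power is an automorphism of the Sylow $2$-part of that group, whence $\mu$ itself is a square; that is all you need. Alternatively, first replace $\varphi_0$ by $\lambda\varphi_0$ with $\lambda$ chosen so that the new determinant has odd order (possible precisely because $\theta(1)$ is odd), and then your ``$\mu$ has odd order'' claim becomes valid. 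Either way the strategy is salvageable, but as written the argument has a gap at each of these two points.
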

	
	\begin{lem} \label{tensor-induction}
		Let $N$ a minimal normal subgroup of a group $G$, $N=S_1 \times \dots \times S_n$ where $S \simeq S$ is a non-abelian simple group. Let $\sigma \in Irr_{rv}(S)$ and suppose that $\sigma $ extends to a real character of $Aut(S)$. Then $\sigma \times \dots \times \sigma$ extends to a real character of $G$.
	\end{lem}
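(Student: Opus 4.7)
The plan is to use tensor induction. Since $N$ is minimal normal, $G$ permutes $\{S_1,\ldots,S_n\}$ transitively by conjugation; moreover, the factors commute pairwise, so every $S_j$ normalises $S_1$, giving $N\le H:=N_G(S_1)$ and $[G:H]=n$. The conjugation action of $H$ on $S_1$ induces a homomorphism $H\to Aut(S_1)\simeq Aut(S)$. I would let $\tilde\sigma\in Irr_{rv}(Aut(S))$ be the real extension provided by the hypothesis and define $\hat\sigma\in Irr_{rv}(H)$ as the pullback of $\tilde\sigma$ along this homomorphism. Since $S_j$ centralises $S_1$ whenever $j\neq 1$, each such $S_j$ acts trivially in $Aut(S_1)$, so $\hat\sigma(s_1\cdots s_n)=\sigma(s_1)$ for every $s_1\cdots s_n\in N$; in particular $\hat\sigma|_N=\sigma\otimes 1\otimes\cdots\otimes 1$.

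The character $\chi:=\hat\sigma^{\otimes G}$ of $G$ obtained by tensor induction is then the desired extension. Reality follows because complex conjugation commutes with tensor induction, so $\overline{\chi}=\overline{\hat\sigma}^{\otimes G}=\hat\sigma^{\otimes G}=\chi$. For the restriction to $N$, I would choose coset representatives $g_1,\ldots,g_n$ of $H$ in $G$ with $g_i^{-1}S_1g_i=S_i$; since $N\le H$, every $n\in N$ fixes each coset, and the tensor-induction formula collapses to $\chi(n)=\prod_{i=1}^n\hat\sigma(g_ing_i^{-1})$. Writing $n=s_1\cdots s_n$ with $s_j\in S_j$, only the $S_1$-component $g_is_ig_i^{-1}$ of $g_ing_i^{-1}$ survives under $\hat\sigma$, so $\chi(n)=\prod_i\sigma(g_is_ig_i^{-1})$. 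The $Aut(S)$-invariance of $\sigma$ (implicit in the existence of the extension $\tilde\sigma$) rewrites this product as $\prod_i\sigma(s_i)=(\sigma\times\cdots\times\sigma)(n)$ under the canonical identifications $S_i\simeq S$.

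The main technical inputs are the restriction formula for tensor induction along a normal subgroup and its compatibility with complex conjugation, both classical. The only delicate point is arranging the coset representatives so that each $g_i$ conjugates $S_i$ onto $S_1$, which is exactly what aligns the tensor-induced character with the external tensor-product structure on $N$.
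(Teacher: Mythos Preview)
Your argument is correct and is exactly the tensor-induction construction the paper invokes: the paper simply cites \cite[Lemma~5]{bianchi-chillag-lewis-pacifici2007} for the extension and then remarks that reality is preserved because the construction commutes with complex conjugation, which is precisely what you spell out. One small imprecision: the reason every element of $N$ fixes each coset $g_iH$ is not merely that $N\le H$ but that $N\trianglelefteq G$, so $g_i^{-1}ng_i\in N\le H$.
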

	\begin{proof}The extension $\chi$ is constructed in \cite[Lemma 5]{bianchi-chillag-lewis-pacifici2007}. We see that if $\sigma$ takes real values, then also $\chi$ does.
	\end{proof}
	The technique used in the proof of Lemma \ref{tensor-induction} is  known as \emph{tensor induction}, for further details see \cite[Section 4]{isaacs1982-corrispondence}.

		\begin{lem} \cite[1.6]{dps2008} \label{dps1.6}
		Let $G$ a finite group that acts by automorphism on the group $M$. For every involution $xC_G(M) \in G/C_G(M)$ there exists a non trivial character $\mu \in Irr(M)$ such that $\mu^x =\bar \mu$.
	\end{lem}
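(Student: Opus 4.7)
Passing to the image of $x$ in $\mathrm{Aut}(M)$, I may assume $x$ acts as a non-trivial involution on $M$, since $xC_G(M)$ being an involution means $x^2\in C_G(M)$. The map $\tau:Irr(M)\to Irr(M)$ defined by $\tau(\mu)=\overline{\mu^x}$ then satisfies $\tau^2=\mathrm{id}$ (because $x^2$ acts trivially on $Irr(M)$ and complex conjugation commutes with the $x$-action), so $\tau$ is an involution on $Irr(M)$ having the trivial character as one fixed point; the plan is to produce a second.

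The main strategy is to find a non-trivial linear character $\lambda$ of $M$ with $\lambda^x=\bar\lambda$, because lifting via the quotient $M\twoheadrightarrow M/M'$ then yields $\mu\in Irr(M)$ with $\mu^x=\bar\mu$. Put $A=\widehat{M/M'}$, identified with $M/M'$ under Pontryagin duality and endowed with the natural $x$-action. In additive notation the condition $\lambda^x=\bar\lambda$ becomes $(1+x)\lambda=0$, so I need $\ker(1+x)\neq 0$. Decomposing $A=A_2\oplus A_{\mathrm{odd}}$ into $2$-primary and odd parts (each $x$-stable): over $A_{\mathrm{odd}}$, where $2$ is invertible, the involution $x$ diagonalizes with eigenvalues $\pm 1$ and $\ker(1+x)$ is the $(-1)$-eigenspace; over the finite $2$-group $A_2$, the identity $(1+x)^2=2(1+x)$ shows that $1+x$ is nilpotent, so it has non-trivial kernel whenever $A_2\neq 0$. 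Consequently $\ker(1+x)$ is non-trivial unless $A_2=0$ and $x$ centralizes $A_{\mathrm{odd}}$---equivalently, unless $M/M'$ has odd order and $x$ acts trivially on $M/M'$.

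The main obstacle is this residual case, in which no linear character can satisfy $\lambda^x=\bar\lambda$. Here I would transfer the problem to conjugacy classes via Brauer's permutation lemma: the involution $\phi:C\mapsto (C^x)^{-1}$ on $\mathrm{Cl}(M)$ corresponds under Brauer to $\tau$ on $Irr(M)$, so it suffices to produce a non-trivial class $C$ of $M$ with $C^x=C^{-1}$. Since $[x,M]\leq M'$ and $x$ acts non-trivially on $M$, $x$ must act non-trivially on $M'$; when $M$ is not perfect, induction on $|M|$ applied to $M'$ together with Clifford theory (using that $[M:M']$ is odd, invoking Lemma \ref{realextension1} for the real-extension control) allows one to lift a suitable $\nu\in Irr(M')$ with $\nu^x=\bar\nu$ to the desired $\mu\in Irr(M)$, while the perfect case requires direct analysis of the $x$-action on the (quasi-)simple factors of $M$.
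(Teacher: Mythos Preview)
The paper does not supply its own proof of this lemma; it is quoted from \cite[1.6]{dps2008} without argument, so there is no in-paper proof to compare against. Your proposal, however, is not complete.

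Your linear-character reduction is correct and cleanly isolates the residual case in which $M/M'$ has odd order and $x$ centralizes $M/M'$. The gap lies in how you treat that case. You assert that ``since $[x,M]\le M'$ and $x$ acts non-trivially on $M$, $x$ must act non-trivially on $M'$,'' and then induct on $M'$. This inference is false: an involution can stabilize the series $1\le M'\le M$ (acting trivially on both layers) without centralizing $M$. For a concrete instance take $M=V\rtimes C_7$ with $V\cong C_2^{\,3}$ and $C_7$ acting irreducibly; here $M'=V$, $M/M'\cong C_7$ is odd, and conjugation by any non-trivial $v\in V$ is an involutory automorphism of $M$ that is the identity on both $V$ and $M/V$ yet non-trivial on $M$. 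Your inductive hypothesis is therefore unavailable, Lemma~\ref{realextension1} does not apply since the $\nu$ you would obtain need not be real, and the perfect case you defer to ``direct analysis'' is left entirely open.

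The standard argument bypasses the abelianization detour. Choose any $m\in M$ with $m^{x}\neq m$ (possible since $x\notin C_G(M)$) and set $n=m(m^{x})^{-1}\neq 1$; then $n^{x}=m^{x}m^{-1}=n^{-1}$, so the class $C$ of $n$ satisfies $C^{x}=C^{-1}$ and $C\neq\{1\}$. Together with the trivial class this gives at least two fixed points of $\phi\colon C\mapsto (C^{x})^{-1}$ on the conjugacy classes of $M$, and Brauer's permutation lemma (which you already invoke) then yields at least two fixed points of $\tau$ on $\mathrm{Irr}(M)$, one of which is the desired non-trivial $\mu$. This handles every case uniformly.
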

	\section{Proofs}
	In the following, we denote an integer a \emph{composite number} if it is divisible by more than one prime. If $p$ is a prime, we denote by $p^*$ a general positive integer that is a power of $p$.
	Moreover, $Rad(G)$ is the solvable radical of $G$, namely the largest solvable normal subgroup of $G$.  
	\begin{teo} \label{noedgesnorad}	
		Let $G$ be a finite non-solvable group such that $cd_{rv}(G)$ consists of prime-power numbers. If $Rad(G)=1$, then $G$ is isomorphic to $A_5$ or $PSL_2(8)$.
	\end{teo}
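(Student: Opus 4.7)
The plan is to start by reading off the structure of $N := Soc(G)$ from $Rad(G)=1$, then reduce successively to the almost simple case and finally to the simple case, where an appeal to the classification of finite simple groups (CFSG) pins down $L$.

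Setup. Because $Rad(G)=1$, the Fitting subgroup vanishes, so $N$ is a centreless direct product of non-abelian simple groups, and $C_G(N)=1$, giving $N \trianglelefteq G \le Aut(N)$. I would first use the lemmas of Section~2 to observe two things. By Theorem~\ref{nst2009thmA}, some non-linear character in $Irr_{rv}(G)$ has odd degree (otherwise $G$ has a normal $2$-complement of odd order, hence contained in $Rad(G)=1$, which would force $G$ to be a $2$-group). By Theorem~\ref{dnt2008thmA}, some non-linear character in $Irr_{rv}(G)$ has even degree (otherwise a Sylow $2$-subgroup is normal, hence trivial, and $G$ is of odd order). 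So $cd_{rv}(G)$ contains both a non-trivial power of $2$ and a non-trivial odd prime power. Moreover, Theorem~\ref{tiep2015thmA} applied to each prime $p$ not appearing as the base of any real degree shows that $G$ is $p$-solvable, so every prime dividing the order of a non-abelian composition factor of $G$ must occur as the base of some real prime-power degree.

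Reducing to a simple socle. Decompose $N = T_1 \times \cdots \times T_k$ into minimal normal subgroups, with $T_i = L_i^{n_i}$ for non-abelian simple $L_i$. I would show $k=1$ and $n_1=1$ by combining Lemma~\ref{tensor-induction} with Clifford theory: the diagonal character $\sigma^{\otimes n_i}$ extends to a real character of $G$ of degree $\sigma(1)^{n_i}$ (forcing $\sigma(1)$ to be a prime power for every $Aut(L_i)$-invariant real $\sigma$), while for a non-$G$-invariant real $\sigma$ of $L_i$ the induced character $\theta^G$ of $\theta = \sigma \otimes 1 \otimes \cdots \otimes 1$ has an irreducible constituent of degree divisible by the orbit length of $\theta$, which is at least $n_i$ (respectively at least $k$ when combining a non-trivial $\sigma$ in $L_i$ with a $1$ in $L_j$, $j\neq i$). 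Coupled with Lemmas~\ref{realextension1} and \ref{realextension} to guarantee a real irreducible constituent, this produces a real character degree of $G$ with at least two distinct prime divisors unless $k=n_1=1$. Hence $N = L$ is non-abelian simple and $L \le G \le Aut(L)$.

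Reducing to $G=L$. With $L$ simple and $G/L \le Out(L)$ non-trivial, I would pick a non-trivial real character $\sigma \in Irr_{rv}(L)$ that is not $G$-invariant (exploiting that $Out(L)$ usually moves some real character; for example the degree-$7$ characters of $L_2(8)$ are cyclically permuted by the field automorphism, and the degree-$3$ characters of $A_5$ are swapped by $S_5$). Using Lemma~\ref{realextension1} when $[G:L]$ is odd, or Lemma~\ref{realextension} for odd-degree $\sigma$ with Schur-Frobenius indicator $1$, I promote $\sigma$ to a real $\chi \in Irr_{rv}(G \mid \sigma)$ of degree $[G:I_G(\sigma)]\cdot\sigma(1)$, which is a product of $\sigma(1)$ by the non-trivial orbit length of $\sigma$ under $G/L$; choosing $\sigma$ so these contribute coprime prime factors gives a real degree that is not a prime power. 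This forces $G=L$.

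Identifying $L$. Now $G=L$ is a non-abelian simple group with $cd_{rv}(L)$ a set of prime powers; we have also shown $cd_{rv}(L)$ contains both a non-trivial odd and a non-trivial even entry. At this point I invoke CFSG and walk through the families: for alternating $A_n$ with $n \ge 6$ one exhibits a real character of composite degree (already $10 \in cd(A_6)$); for groups of Lie type $L(q)$ the Steinberg character has degree a power of the defining characteristic, while the semisimple/unipotent degrees $q\pm 1$, $q^2\pm q+1$, \ldots quickly acquire two distinct prime divisors except for $PSL_2(q)$ with $q\in\{4,8\}$, where the degrees $\{1,q-1,q,q+1\}$ are all prime powers exactly when $\{q-1,q+1\}\subset\{\text{prime powers}\}$; for $q$ odd one also has to exclude the two half-characters of degree $(q\pm 1)/2$, and a short check using Catalan/Mih\u{a}ilescu rules out every $q \ne 4,8$; sporadic groups are eliminated by inspection of their ATLAS tables. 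This yields $L \simeq A_5$ or $L \simeq L_2(8)$, completing the proof.

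The main obstacle is unquestionably the CFSG case analysis in the final step: the reductions to almost simple and to simple are quick consequences of the lemmas already collected in Section~2, but extracting $\{A_5, L_2(8)\}$ as the only simple groups whose real character degrees are all prime powers requires a careful, family-by-family examination (or a citation to a prior enumeration along the lines of Manz's classifications).
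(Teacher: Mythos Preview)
Your overall architecture is right, but two of your reductions have gaps, and you miss the paper's key shortcut for identifying $L$.

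\textbf{The $k=1$ step is wrong as stated.} Distinct minimal normal subgroups $T_1,\dots,T_k$ are each individually normal in $G$, so $G$ does not permute them; the $G$-orbit of $\sigma\times 1_{T_j}$ lies entirely inside $Irr(T_i)\times\{1_{T_j}\}$ and its length has nothing to do with $k$. The paper's argument (its Step~4) instead chooses odd-degree real $\sigma\in Irr_{rv}(T_1)$ and $\rho\in Irr_{rv}(T_2)$ with coprime degrees $>1$; since $T_1\times T_2$ is perfect, $\sigma\times\rho$ has odd composite degree and trivial determinantal order, so Lemma~\ref{realextension} produces a real irreducible character of $G$ above it of composite degree. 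Your $n_i=1$ argument also needs this two-primes trick spelled out: one character $\sigma$ of degree $p^*$ and another $\rho$ of degree $q^*$ force $n_i\mid[G:I_G(\theta_\sigma)]=p^*$ and $n_i\mid[G:I_G(\theta_\rho)]=q^*$, hence $n_i=1$; a single $\sigma$ does not suffice.

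\textbf{Reducing to $G=L$ before restricting $L$ is premature.} At that stage $L$ is an arbitrary non-abelian simple group, so you would need, for \emph{every} such $L$ and every $L<G\le Aut(L)$, a non-$G$-invariant real $\sigma\in Irr_{rv}(L)$ with the right parity to trigger Lemma~\ref{realextension1} or~\ref{realextension} and with $[G:I_G(\sigma)]$ contributing a prime different from those in $\sigma(1)$. You verify this only for $A_5$ and $L_2(8)$; ``$Out(L)$ usually moves some real character'' is not a proof. The paper reverses the order: it first cuts $L$ down to three candidates $\{A_5,A_6,L_2(8)\}$ and only then disposes of the finitely many almost simple overgroups by direct inspection.

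\textbf{The identification of $L$.} You propose a full CFSG family-by-family walk, which is feasible but, as you say, laborious. The paper sidesteps it entirely. Having shown via Theorem~\ref{tiep2015thmA} that every prime dividing $|S|$ is the base of some real degree, it invokes the result of \cite{dnt2008} that the prime graph $\Delta_{rv}(G)$ on real character degrees has at most three connected components. Under the prime-power hypothesis this graph consists of isolated vertices, so at most three primes occur among the real degrees, whence $|\pi(S)|=3$. The simple groups with exactly three prime divisors form a known list of eight (see \cite{zhang-chen-chen-liu-2009}), and five of them are eliminated by exhibiting a single real character of composite degree via Lemmas~\ref{realextension} and~\ref{tensor-induction}. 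This connected-components bound is the device your outline is missing, and it is what replaces the CFSG walk.
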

	\begin{proof}
	Let be $M$ a minimal normal subgroup of $G$. Then $M=S_1 \times \dots \times S_n$ is the product of simple groups, all isomorphic to to a simple group $S$. Since $Rad(G)=1$, the group $S$ is non-abelian.\newline
	
	\textbf{Step 1}: $S$ is isomorphic to one of the following groups
	\[ A_5, A_6, PSL_2(8), PSL_3(3), PSp_4(3), PSL_2(7), PSU_3(3),PSL_2(17).\]
	Let $p \in \pi(M)$. Since $M$ is minimal normal in $G$, we have $M \le O^{p'}(G)$, so $O^{p'}(G)$ is non-solvable. By Theorem \ref{tiep2015thmA} there is a real irreducible character $\chi$ of $G$ such that $p \mid \chi(1)$. By the hypothesis, $\chi(1)=p^*>1$. This means that for every prime $p \in \pi(M)$, there is $\chi \in Irr_{rv}(G)$ such that $\chi(1)=p^*>1$. By Theorem A of \cite{dnt2008}, if $\Delta_{rv}(G)$ is the prime graph on real character degrees of $G$, then the number of connected components of $\Delta_{rv}(G)$ is at most three. In our hypotheses, $\Delta_{rv}(G)$ consists in isolated vertices and hence the number of primes that appear as divisors of the degree of some real irreducible character, is at most $3$. It follows that $M$, and hence $S$, is divisible by exactly $3$ primes. Now, by Lemma $2.1$ in \cite{zhang-chen-chen-liu-2009}, the simple groups having order divided by exactly $3$ distinct primes are those stated. \newline
	
	\textbf{Step 2}: $S$ is isomorphic to one of the following groups: $A_5, PSL_2(8), A_6$\newline 
	
	If $S \in \{ PSp_4(3), PSL_3(3), PSU_3(3)\}$ then there is a non-linear character $\sigma \in Irr_{rv}(S)$ such that $\sigma(1)$ is an odd composite number. Let $\theta=\sigma\times \dots \times \sigma \in Irr_{rv}(M)$. Then $ 2 \nmid \theta(1)$ and $o(\theta)=1$, since $M$ is perfect. So, by Lemma \ref{realextension}, there is $\chi \in Irr_{rv}(G\mid\theta)$. As $\theta(1)$ divides $\chi(1)$, the degree of $\chi$ is a composite number, against the hypothesis.
	
	Suppose that $S \in \{PSL_2(7), PSL_2(17)\}$. There is a real character $\sigma \in Irr_{rv}(S)$ such that $\sigma(1)$ is a composite number and $\sigma$ extends to a real character of $A=Aut(S)$. By tensor induction (Lemma \ref{tensor-induction}), the character $\theta=\sigma \times \dots \times \sigma$ extends to a real character $\chi \in Irr_{rv}(G)$. Again $\chi(1)=\theta(1)=\sigma(1)^n$ is a composite number.\newline
	
	\textbf{Step 3}: $n=1$ and $M$ is a simple group.\newline
	
	The only left possibilities are $S \in \{A_5, PSL_2(8), A_6\}$. Checking the character table of these groups, we see that there are two non-linear characters $ \sigma, \rho \in Irr_{rv}(S)$ such that $\sigma(1)=p^*>1$ and $\rho(1)=q^*>1$ for $p,q$ odd distinct primes. Let $\theta=\sigma \times 1 \times \dots \times 1 \in Irr_{rv}(M)$. Since $o(\theta)=1$ and $\theta(1)$ is odd, the character $\theta$ extends to a character $\varphi \in Irr_{rv}(I_G(\theta))$ by Lemma \ref{realextension} and $\chi=\varphi^G$ has degree $p^*$, hence $[G:I_G(\theta)]=p^*>1$. Since $I_G(\theta) \le N_G(S_1)$, we have that
	\[n=[G:N_G(S_1)] \text{ divides } [G:I_G(\theta)]=p^*>1,
	\]
	so $n=p^*$. By the same argument with $\rho$ in place of $\sigma$, we get that $n=q^*$ and $n \mid (p^*,q^*)=1$. \newline
	
	\textbf{Step 4}: $C_G(M)=1$.\newline
	
	Suppose, by contradiction, that $C_G(M)>1$ and take $N$ a minimal normal subgroup of $G$ contained in $C_G(M)$. For the same arguments used on $M$, we have that $N$ is simple and is isomorphic to one of the following groups $A_5, PSL_2(8), A_6$. As before, take $\sigma \in Irr_{rv}(M)$ with $\sigma(1)=p^*$ and $ \rho \in Irr_{rv}(N)$ with $\rho(1)=q^*$ for $p,q$ odd distinct primes. Note that $[M,N] \le M \cap N \le M \cap C_G(M)=1$ since $M$ is simple and non abelian. So $MN=M \times N$ is perfect normal in $G$ and $\theta=\sigma \times \in Irr_{rv}(MN)$. Note that $o(\theta)=1$ and $2 \nmid \theta(1)$. By Lemma \ref{realextension} there is $\chi \in Irr_{rv}(G\mid \theta)$, and this is impossible, since $\chi(1)$ is not a composite number.\newline
	
	\textbf{Conclusion}: we proved, so far, that: $S \le G \le Aut(S)$ and that \[ S \in \{A_5 , A_6, PSL_2(8)\}.\]
	Now, $S$ cannot be the alternating group $ A_6$ because each of the $5$ subgroups between $S$ and $Aut(S)$ has a rational irreducible character of degree $10$ (it is possible check this with the software \texttt{GAP}), so $S \in \{A_5, PSL_2(8)\}$. In any of these cases, $[Aut(S):S]$ is a prime number and there is only one subgroup strictly above $S$, namely $Aut(S)$ itself. But both $Aut(A_5)$ and $Aut(PSL_2(8))$ have a real irreducible character with composite degree. Hence $G =A_5$ or $G=PSL_2(8)$.
	\end{proof}
	%In the previous Theorem, specific information about simple groups, their character degrees and automorphism groups is obtained using the software GAP.
	%For a finite group $G$ denote $M(G)=H^2(G,\mathbb{C})$ its Schur multiplier. Note that $\abs{M(A_5)}=2$ and $M(PSL_2(8))=1$.

	\begin{prop} \label{noedgessimple}
		Let $G$ be a finite non-solvable group such that $cd_{rv}(G)$ consists of prime-power numberss. Then $G=KR$ with $R=Rad(G)$ and $K=G^{(\infty)}$. Moreover $K\cap R=L$ is a $2$-group and $K/L$ is isomorphic to $A_5$ or $PSL_2(8)$.
	\end{prop}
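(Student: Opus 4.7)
The plan is to handle the structural decomposition via Theorem~\ref{noedgesnorad} applied to $G/R$, and then to show that $L$ is a $2$-group by a minimal counterexample argument combined with character-theoretic machinery.

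For the structural part, set $\bar G = G/R$. It is non-solvable (as $R$ is solvable while $G$ is not), has $Rad(\bar G) = 1$, and inherits prime-power real character degrees since real characters of $\bar G$ inflate to real characters of $G$. Theorem~\ref{noedgesnorad} then forces $\bar G \cong A_5$ or $PSL_2(8)$, both of which are perfect. Consequently $\bar G = \bar G^{(\infty)}$, so $G = G^{(\infty)} R = K R$, and $K/L = K/(K \cap R) \cong K R/R \cong \bar G$. The complementary observation $[K, R] \subseteq K \cap R = L$ also implies $G/L \cong (K/L) \times (R/L)$, a direct product of a non-abelian simple group by a solvable group.

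To show $L$ is a $2$-group, we argue by minimal counterexample: suppose $G$ has minimal order among the counterexamples (groups satisfying the hypotheses with $L$ not a $2$-group). For any $1 < N \trianglelefteq G$ with $N \le L$, the quotient $G/N$ still satisfies the hypotheses, so by minimality $L(G/N) = L/N$ is a $2$-group. Taking $N$ to be a minimal normal subgroup of $G$ contained in $L$, the only way to avoid $L$ itself being a $2$-group is that $N$ is an elementary abelian $p$-group for some odd prime $p$. Using that $K/L$ has even order, pick an involution $x \in K$; if $x$ acts non-trivially on $N$, Lemma~\ref{dps1.6} yields a non-trivial $\mu \in Irr(N)$ with $\mu^x = \bar\mu$, and a Clifford-theoretic analysis through the inertia groups $I_G(\mu) \le I_G(\{\mu, \bar\mu\})$ produces an irreducible real character $\chi \in Irr_{rv}(G)$ whose degree inherits both a factor of $2$ (from the $\mu/\bar\mu$ pairing in its $G$-orbit) and an odd prime factor (from the action of the simple factor $K/L$ on the orbit together with the inertia structure), contradicting the prime-power hypothesis.

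The main obstacle is the residual case in which every involution of $K$ centralizes $N$. Since $A_5$ and $PSL_2(8)$ are generated by their involutions and $K$ is perfect, one can then show $C_K(N) L = K$, which forces $K/C_K(N) \cong L/(L \cap C_K(N))$ to be simultaneously perfect (as a quotient of $K$) and solvable (as a quotient of $L$), hence trivial; so $K$ centralizes $N$ and $N \le Z(K)$. Then $N$ sits as an odd-order central subgroup of the perfect group $K$, providing a perfect central extension of $K/N$ by $N$; since the Schur multipliers of $A_5$ and $PSL_2(8)$ are $\mathbb{Z}/2$ and $1$ respectively, both with no odd component, and $L/N$ is a $2$-group as arranged, a careful cohomological or direct argument should eliminate this residual case, giving the desired contradiction.
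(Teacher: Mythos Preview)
Your structural reduction via Theorem~\ref{noedgesnorad} and the minimal-counterexample setup (reducing to a minimal normal odd $p$-subgroup $N\le L$ with $L/N$ a $2$-group) are correct and match the paper. Two genuine gaps remain, however.

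First, your dichotomy is miscalibrated. Lemma~\ref{dps1.6} requires an involution in $K/C_K(N)$, not an involution in $K$ itself; elements of order $4$ in $K$ that square into $C_K(N)$ produce involutions in the quotient without being involutions of $K$. Thus ``every involution of $K$ centralizes $N$'' does not imply $C_K(N)L=K$: the fact that $A_5$ and $PSL_2(8)$ are generated by involutions concerns $K/L$, and involutions of $K/L$ need not lift to involutions of $K$ (already in $SL_2(5)$ the unique involution is central). The paper avoids this by using simplicity of $K/L$ directly: since $C_K(N)\trianglelefteq K$, either $C_K(N)=K$ (whence the Schur-multiplier argument applies after splitting off the $2$-part of $L$) or $C_K(N)\le L$, in which case $K/C_K(N)$ has even order and Lemma~\ref{dps1.6} applies.

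Second, the ``Clifford-theoretic analysis'' you invoke is the technical core of the proof, and your sketch misses its actual content. Since $\abs{N}$ is odd, $\mu\ne\bar\mu$, so producing a \emph{real} $\chi$ above $\mu$ is not automatic: one must build a linear $\delta\in Irr(I_G(\mu)\mid\mu)$ with $\delta^x=\bar\delta$ so that $\delta^G$ is real. The paper first reduces to $[G:K]$ a $2$-power (showing $R/L$ is of Chillag--Mann type and passing to the odd-index normal subgroup $KH$, a step you omit), then uses that odd Sylow subgroups of $A_5$ and $PSL_2(8)$ are cyclic to make the relevant $p$-group abelian, and finally applies Maschke's theorem to split off an $\langle x\rangle$-invariant complement and define $\delta$. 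Moreover, the odd prime factor in $\chi(1)$ does not always come ``from the orbit'': when $I_G(\mu)H=G$ the index contributes only a power of $2$, and the paper instead multiplies $\delta$ by the unique rational odd-degree character of $I/(I\cap H)\cong K/L$ via Gallagher before inducing. Your outline does not account for this case split.
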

	
	\begin{proof} 
	Let $K=G^{{(\infty)}}$ be the last term of the derived series of $G$ and call $\bar G=G/K\cap R$. Observe quotients preserve the hypotheses. Hence, by Theorem \ref{noedgesnorad}, $G/R$ is a simple group. Since $1<KR/R \normal G/R$, we have that $G=KR$ and $\bar K \simeq G/R$ is isomorphic to $A_5$ or $PSL_2(8)$. Moreover, $\bar G=\bar K\times\bar R$ because $[K,R]\le L$. \newline
	Suppose by contradiction that there is $\theta \in Irr_{rv}(\bar R)$ of non-trivial degree. By Theorems \ref{nst2009thmA} and \ref{dnt2008thmA}, there are two non linear characters $\phi_1, \phi_2 \in Irr_{rv}(\bar K)$ such that $\phi_1(1)$ is even and $\phi_2(1)$ is odd. If $\theta(1)$ is odd, consider $\chi=\theta\phi_1$ and if $\theta(1)$ is even, consider $\chi=\theta\phi_2$. In any case, $\chi$ is a composite number, but this is impossible. It follows that every real character of $R/L$ is linear and by Theorem \ref{chillag-mann-1998-thm1.1} $\bar R= \bar O \times \bar H$, where $O \in Hall_{2'}(R)$ and $H \in Syl_2(R)$. Write $G_0$ for the preimage in $G$ of $\bar K \bar H$, note that $G_0$ is a normal subgroup of odd index in $G$. Note that $G_0=LKH=KH$. By Lemma \ref{realextension1}, $cd_{rv}(G_0)$ consists of prime-power numbers. Moreover $K=G_0^{(\infty)}$ and $Rad(G_0)\cap K=L$. Hence we can assume that $G=G_0$. This implies that $O \le L$.\newline

	Suppose, working by contradiction, that $O>1$, namely $L$ is not a $2$-group.
	Consider $M/M_0$ the first term (from above) of a principal series of $G$ such that $M,M_0 \le L$ and $M/M_0$ is not a $2$-group. Hence $M/M_0$ is an elementary abelian $p$-group for $p$ odd and $L/M$ is a $2$-group. Possibly replacing $G$ with $G/M_0$, we can assume that $M_0=1$ and $M$ is a minimal normal subgroup of $G$. \newline
	Since $K/L$ is simple, $C_K(M)=K$ or $C_K(M) \le L$. If $C_K(M)=K$, then $M$ has a direct complement $N$ in $L$ and consider $\bar K=K/N$. Note that  $1<\bar M \le Z(\bar K)\cap \bar K'$, since $K=K'$ is perfect and hence $\abs{M}$ divides $\abs{M(G)}$ by \cite[11.20]{ctfg}, where $M(G)$ denotes the Schur multiplier of $G$. But this is impossible, since $\abs{M(A_5)}=2$ and $M(PSL_2(8))=1$.\newline 
	
	Hence $C_K(M) \le L$ and the action of $K$ on $M$ is non-trivial. Moreover $K/L$ has even order, so by Lemma \ref{dps1.6} there is an element $\lambda \in \hat{M}$ and $ x \in K$ such that $\lambda^x=\bar \lambda$. Let $I=I_G(\lambda)$ and note that $x \in N_G(I)\setminus I$, so $2$ divides $[G:I]$.\newline 
	
	Let $\bar I=I/Ker(\lambda)$ (we remark that "bar" notation here is not the same as in first part of the proof) and observe that $\bar M \le Z(\bar I)$. Take $P  \in Syl_p(I)$; since the index of $K$ in $G$ is a $2$-power, every subgroup of $G$ with odd order is contained in $K$, hence $P \le K$. Moreover, $\bar M \le Z(\bar P)$, $\bar P \in Syl_p(\bar I)$ and $PL/L $ is a $p$-subgroup of the simple group $K/L$, that is isomorphic to $A_5$ or $PSL_2(8)$. Now, if $p$ is an odd prime, every Sylow $p$-subgroup of $A_5$ or $PSL_2(8)$ is cyclic (see tables \ref{tab:subgroupsA5} and \ref{tab:subgroupsPSL(2,8)}). Hence, $P/M\simeq\bar P/\bar M \simeq PL/L$ is cyclic and $\bar P$ is abelian.\newline

	Since $\bar M \le Z(\bar I)$, we have that $\bar M \nleq \bar I'$ by Theorem \cite[5.3]{fgt}. In addition $\bar M \cap \bar I'=1$ because $\bar M$ has order $p$. Write $\bar I/\bar I'=Q \times B$, where $B \in Hall_{p'}(\bar I/\bar I')$ and $Q \in Syl_p(\bar I /\bar I')$. Note that $Q$ and $B$ are $x$-invariant, as $x$ normalizes $I$. By abuse of notation, we write $M \le Q$ in the place of $\bar M \bar I'/\bar I' \le Q$. In this notation $M$ is a group of order $p$ and $\lambda$ is a faithful character of $M$. The $2$-group $\langle x \rangle$ acts on the abelian group $Q$, hence by Maschke's Theorem \cite[8.4.6]{ks} there is an $\langle x \rangle$-invariant complement $T$ for the $\langle x \rangle$-invariant subgroup $M$, so $Q=M\times T$. Let $\hat \lambda=\lambda \times 1_T \in Irr(Q)$ and $\delta=\hat \lambda \times 1_B \in Irr(\bar I /\bar I')$, we have that
	\[
	\delta^x=\hat \lambda^x \times 1_{B^x}=(\lambda^x \times 1_{T^x})\times 1_B=(\bar \lambda \times 1_T) \times 1_B= \bar \delta.
	\]
	We return to the previous notation, so $\delta$ lifts to a character of $I$, that we call again $\delta$. Note that $I<G$ as $2$ divides $[G:I]$.\newline
	
	If $IH<G$, then $IH/H$ is a proper subgroup of $G/H$ that is a simple group isomorphic to $A_5$ or $PSL_2(8)$. The maximal subgroups of these two groups are known as well as their indexes, see tables \ref{tab:subgroupsA5} and \ref{tab:subgroupsPSL(2,8)}. In particular, there always is an odd prime $q$ such that $q$ divides $[G:IH]$ and hence $2q $ divides $[G:I]$.
	Note that $\delta \in Irr(I\mid\lambda)$, so $\chi = \delta^G \in Irr(G)$. Moreover
	\[\bar \chi = (\bar \delta)^G=(\delta^x)^G=\delta^G=\chi.
	\]
	Hence $\chi$ is a real character of $G$ and $2q \mid \chi(1)$ since $2q \mid [G:I]$, and this is impossible. \newline
	
	Suppose now $IH=G$. In this case, $I/I\cap H \simeq G/H$ that is isomorphic to $A_5$ or $PSL_2(8)$. These groups have a unique rational character $\phi$ of odd degree. The element $x$ stabilizes the section $I/I\cap H$, hence by uniqueness $\phi^x=\phi$. By Gallagher Theorem \cite[6.17]{ctfg}, $\phi \delta \in Irr(I\mid \lambda)$ and by Clifford corrispondance, $\chi=(\phi\delta)^G \in Irr(G)$. Since $\phi$ is a real $x$-invariant character and $\delta^x=\bar \delta$, we have that $(\phi\delta)^x=\overline{\phi\delta}$. Hence, as before $\chi$ is a real irreducible character. Now $\theta(1) \mid \chi(1)$ and there is an odd prime $q$ such that $q$ divides $\chi(1)$. Moreover $2 \mid \chi(1)$ since $2$ divides $[G:I]$. So $\chi(1)$ is a composite number and this is impossible.\end{proof}
We give the list of maximal subgroups of $A_5$ and $PSL_2(8)$ and their indices. 
\begin{table}[h!]
	\begin{center}
		\caption{Maximal subgroups of $A_5$.}
		\label{tab:subgroupsA5}
		\begin{tabular}{l|c|r} % <-- Alignments: 1st column left, 2nd middle and 3rd right, with vertical lines in between
			$A_4$ & $D_{10}$ & $S_3$\\
			
			\hline
			$12$ & $10$ & $6$\\
			\hline
			$5$ & $6$ & $10$\\
		\end{tabular}
	\end{center}
\end{table}
\begin{table}[h!]
	\begin{center}
		\caption{Maximal subgroups of $PSL_2(8)$.}
		\label{tab:subgroupsPSL(2,8)}
		\begin{tabular}{l|c|r} % <-- Alignments: 1st column left, 2nd middle and 3rd right, with vertical lines in between
			$F_{56}$ & $D_{18}$ & $D_{14}$\\
			
			\hline
			$56$ & $18$ & $14$\\
			\hline
			$9$ & $28$ & $72$\\
		\end{tabular}
	\end{center}
\end{table}

\begin{lem}\label{extensions-simple-groups}
	Let be $K$ a perfect group and $M$ a minimal normal subgroup of $K$ that is an elementary abelian $2$-group. Suppose that $M$ is non-central in $K$ and $K/M$ is isomorphic to $L_2(8)$ or $A_5$. Then $K$ has an irreducible non-linear real character with odd composite degree .
\end{lem}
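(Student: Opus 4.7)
The plan is to split on the isomorphism type of $K/M$.

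If $K/M\simeq L_2(8)$, the argument is immediate: $L_2(8)$ has a real irreducible character of degree $9$ (one of the three principal-series characters at the split torus of order $7$; it is real because every element of $L_2(8)$ is conjugate to its inverse, since the Weyl groups act by inversion on both tori). Inflating it along the quotient $K\twoheadrightarrow K/M$ produces a real irreducible character of $K$ of odd composite degree $9$.

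For $K/M\simeq A_5$ the problem is harder, because $cd(A_5)=\{1,3,4,5\}$ contains no odd composite, and no inflated character works; I must produce a real character of $K$ lying above a nontrivial $\lambda\in Irr(M)$. Here $\lambda$ is automatically real-valued because $M$ is elementary abelian of exponent $2$. The non-centrality and minimality of $M$ force $M$ to be a nontrivial irreducible $\mathbb{F}_2 A_5$-module, so $M$ is, up to isomorphism, either the natural $SL_2(4)$-module $V$ (a single nonzero $A_5$-orbit of size $15$) or the deleted permutation module $W$ on $5$ points (nonzero $A_5$-orbits of sizes $5$ and $10$).

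In each subcase I use Clifford theory. For $V$, pick any $\lambda\neq 1_M$; the inertia group $T:=I_K(\lambda)$ satisfies $T/M\simeq C_2\times C_2$ (a Sylow $2$-subgroup of $A_5$) and $[K:T]=15$. Extend $\lambda$ to a real linear character $\tilde\lambda$ of $T$ and induce: $\tilde\lambda^K\in Irr(K)$ is real of degree $15$. For $W$, take $\lambda$ in the orbit of size $5$, so $T/M\simeq A_4$ and $[K:T]=5$; extend $\lambda$ to $\tilde\lambda\in Irr(T)$, multiply by the inflation to $T$ of the rational degree-$3$ character of $A_4$ (a real character), and apply Gallagher's theorem to obtain a real irreducible character of $T$ above $\lambda$ of degree $3$. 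Inducing to $K$ gives a real irreducible character of degree $15$.

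The main obstacle is verifying the extensibility of $\lambda$ from $M$ to $T$. I reduce this to the splitting of $1\to M\to K\to A_5\to 1$: once $K=M\rtimes A_5$ is split, $T$ also splits as $M\rtimes(T/M)$ and $\lambda$ extends trivially across the complement. Splitting reduces in turn to the vanishing of $H^2(A_5,M)$ for both candidate modules: for $W$, via the decomposition $\mathbb{F}_2^5=\mathbb{F}_2\oplus W$ combined with Shapiro's lemma and the computation $H^2(A_4,\mathbb{F}_2)\simeq\mathbb{F}_2$; and for $V$, via the classical vanishing $H^2(SL_2(q),\mathbb{F}_q^2)=0$.
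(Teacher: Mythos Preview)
Your treatment of the $A_5$ case is correct and in fact more explicit than the paper's. Both you and the paper reduce to the split extension via $H^2(A_5,M)=0$; the paper then simply asserts (citing a \texttt{GAP} check) that each $W_i\rtimes A_5$ has a real irreducible of degree $15$, whereas you actually construct such a character through Clifford theory. Your cohomology arguments (Shapiro for the deleted permutation module, the $SL_2$ vanishing for the natural module) are sound.

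The $L_2(8)$ case, however, has a genuine gap. In this paper ``composite'' is defined, just before Theorem~3.1, to mean \emph{divisible by more than one prime}; it is not the usual ``non-prime''. The degree-$9$ character you inflate from $L_2(8)$ has prime-power degree $3^2$ and therefore does not meet the conclusion. This is not a quibble about terminology: the sole purpose of the lemma (see its use in Proposition~3.4) is to produce a real irreducible whose degree is \emph{not} a prime power, in order to contradict the standing hypothesis on $cd_{rv}(G)$. Since $cd(L_2(8))=\{1,7,8,9\}$ consists entirely of prime powers, no inflation argument can possibly work; one must use the non-centrality of $M$ in this case as well. The paper does this by classifying the irreducible $\mathbb{F}_2\,L_2(8)$-modules (dimensions $6$, $8$, $12$), splitting the extension when $H^2$ vanishes (the $8$- and $12$-dimensional modules), enumerating the finitely many perfect groups of order $2^6\cdot|L_2(8)|$ when it does not (the $6$-dimensional module, where $\dim H^2=3$), and exhibiting in every case a real irreducible of degree $63=7\cdot 9$.
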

\begin{proof}
	Since $G/M$ is simple we have that $C_G(M)=M$. Suppose that $K/M$ is isomorphic to $A_5$. There are two non isomorphic irreducible $A_5$-modules $W_1,W_2$ of $A_5$ over $GF(2)$. Both have dimension $4$ and $H^2(A_5,W_1)=H^2(A_5,W_2)=0$. Hence $M$ has a complement $S$ in $K$. It is easy to construct these groups and we see that $K=M\rtimes S=W_i\rtimes A_5$ has a real irreducible character of degree $15$. Suppose now that $K/M \simeq L_2(8)$. Let be $W_1,W_2,W_3$ the non-isomorphic irreducible $L_2(8)$-modules over  $GF(2)$, where $\dim(W_1)=6,\dim(W_2)=8$ and $\dim(W_3)=12$. If $M\simeq M_i$ with $i=2,3$, then $H^2(L_2(8),W_i)=0$ and hence $M_i$ has a complement $S$ in $K$. Then, as before, we conclude observing that $W_i\rtimes L_2(8)$ has a real irreducible character of degree $63$. Suppose that $M \simeq W_1$. Then $\dim{H^2(L_2(8),W_1)}=3$. Nevertheless, there are just two perfect groups of order $2^6 \cdot \abs{L_2(8)}$. Both these groups have an irreducible real character of degree $63$.
\end{proof}
In the previous Lemma, dimensions of chomology groups and all the perfect groups of a given order is information that is accesible with the \texttt{GAP}'s functions \texttt{cohomolo} and \texttt{PerfectGroup}.
\begin{prop} \label{perfect-characterized}
Let $G$ be a finite non-solvable group and suppose that $cd_{rv}(G)$ consists of prime-power numbers. Let be $K=G^{(\infty)}$ and $R=Rad(G)$. Then $\abs{K\cap R}\le 2$ and if equality holds, then $K \simeq SL_2(5)$.
\end{prop}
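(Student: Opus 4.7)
The plan is first to show that $L := K \cap R \le Z(K)$, and then to bound $\abs{L}$ via the Schur multiplier of the simple group $K/L$. By Proposition \ref{noedgessimple}, $L$ is a $2$-group, $K$ is perfect, and $K/L$ is isomorphic to $A_5$ or $L_2(8)$. Once the centrality $L \le Z(K)$ is established, $K$ becomes a perfect central extension of $K/L$, so $L$ is a quotient of the Schur multiplier $M(K/L)$; since $M(A_5) \simeq \mathbb{Z}/2$ and $M(L_2(8)) = 1$, we conclude $\abs{L} \le 2$, with equality forcing $K/L \simeq A_5$ and $K$ to be the universal perfect central extension $SL_2(5)$.

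To prove $L \le Z(K)$, I argue by contradiction and split according to whether $K$ centralizes every chief factor of its action on $L$. First, suppose every $K$-chief factor inside $L$ is central; I then claim $L \le Z(K)$, which already contradicts the assumption. This is proved by induction on $\abs{L}$: for $M$ a minimal normal subgroup of $K$ contained in $L$, hypothesis gives $M \le Z(K)$, and the induction hypothesis applied to the $K$-invariant $2$-group $L/M$ (whose chief factors are still central) yields $[K,L] \le M$, so $[K,L] \le Z(K)$. Then, for each fixed $l \in L$, the map $k \mapsto [k,l]$ is a homomorphism $K \to [K,L]$ (because $[K,L]$ is central in $K$, the cocycle correction $[k_1,l]^{k_2} = [k_1,l]$ disappears), hence is trivial since $K$ is perfect. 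This gives $[K,L] = 1$.

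In the remaining case, some $K$-chief factor inside $L$ is non-central, and I would select $N \normal K$ with $N < L$ such that $L/N$ is a minimal normal subgroup of $\bar K := K/N$ and is non-central in $\bar K$. Then $\bar K$ is perfect, $\bar L := L/N$ is an elementary abelian $2$-group, minimal normal and non-central in $\bar K$, and $\bar K / \bar L \simeq K/L \in \{A_5, L_2(8)\}$. Lemma \ref{extensions-simple-groups} then supplies a real irreducible character $\bar\psi$ of $\bar K$ of odd composite degree, which inflates to $\psi \in Irr_{rv}(K)$ of the same (odd, composite) degree. Since $K$ is perfect, $o(\psi) = 1$, so Lemma \ref{realextension} applied with $K \normal G$ yields $\chi \in Irr_{rv}(G \mid \psi)$ with $\psi(1) \mid \chi(1)$. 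Hence $\chi(1)$ is composite, contradicting the hypothesis on $cd_{rv}(G)$.

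The main obstacle is the selection of $N$ in this second case: a priori a non-central $K$-chief factor may appear only in the middle of a chief series of $L$, and it need not be realizable as a top quotient $L/N$ with $N$ maximal $K$-invariant proper in $L$. One has to arrange, by a preliminary $K$-invariant quotient if necessary, that such a factor appears at the top, so that $L/N$ is simultaneously minimal normal in $K/N$ and non-central in $K/N$; this is where the delicate bookkeeping on the $K$-module structure of the $2$-group $L$ is required before Lemma \ref{extensions-simple-groups} can be applied.
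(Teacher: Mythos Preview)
Your Case~A argument is correct and standard: a perfect group acting with all chief factors central on a normal section must centralize it. The gap you flag in Case~B, however, is genuine and is precisely the missing ingredient that separates your sketch from a complete proof.

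If $K/L\simeq L_2(8)$ there is no problem, since $M(L_2(8))=1$ forces every top chief factor $L/N$ to be non-central and your selection of $N$ succeeds. The difficulty is when $K/L\simeq A_5$. There the top factor $L/N_1$ may be central; then $\abs{L/N_1}=2$ and $K/N_1\simeq SL_2(5)$. Because $M(SL_2(5))=1$, the next factor $N_1/N_2$ is automatically non-central in $K/N_2$ --- but now $(K/N_2)/(N_1/N_2)\simeq SL_2(5)$, which is not one of the groups covered by Lemma~\ref{extensions-simple-groups}. Your proposed remedy of ``a preliminary $K$-invariant quotient'' cannot help as stated: quotienting $K$ by any $K$-normal subgroup contained in $L$ leaves $K/L$ unchanged, so the top of the new chief series behaves exactly as before.

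The paper's proof handles exactly this configuration by an additional maneuver inside $\bar K=K/N_2$. One lifts the central involution of $K/N_1\simeq SL_2(5)$ to a $2$-element $t\in K$ and shows that $\bar t$ centralizes $\bar N_1$ (a central element must act as a scalar on the irreducible $GF(2)$-module $\bar N_1$, hence trivially) and that $\bar t^{\,2}=1$ (otherwise $\langle\bar t^{\,2}\rangle$ would be a proper nontrivial $K$-submodule of $\bar N_1$). Thus $\langle\bar t\rangle$ is a normal subgroup of order $2$ in $\bar K$, and $\bar K/\langle\bar t\rangle$ is perfect with a non-central minimal normal $2$-subgroup (the image of $\bar N_1$) and simple quotient $A_5$. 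Now Lemma~\ref{extensions-simple-groups} does apply, and via Lemma~\ref{realextension} one obtains the forbidden real irreducible character of composite degree. This ``first split off the lifted central involution'' step is the idea your argument lacks.
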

\begin{proof} By Proposition \ref{noedgessimple}, we have that $N=K\cap R$ is a $2$-group. We prove that if $N>1$ then $\abs{N}=2$ and $K$ is isomorphic to $SL_2(5)$. Let be $V=N/\Phi(N)$, then $V$ a normal elementary abelian $2$-subgroup of $G/\Phi(N)$. Let $V>V_1>\dots >V_n$ a $K$-principal series of $V$. Let be $N>N_1>\dots >N_n$ such that $N_i$ the preimage in $N$ of $V_i$. Then $N/N_1$ is an irreducible $K/N$-module and $K/N$ is isomorphic $A_5$ or $L_2(8)$ by Proposition \ref{noedgessimple}. By Lemmas \ref{extensions-simple-groups} and \ref{realextension}, $N/N_1$ is central in $K/N_1$. Since $K$ is perfect, we have that $N/N_1$ is isomorphic to a subgroup of the Schur multiplier $M(K/N)$. The only possibility is $\abs{N/N_1}=2$ and $K/N_1\simeq SL_2(5)$, the Schur covering of $A_5$. Suppose by contradiction that $N_1/N_2>1$, write $\bar K=K/N_2$. Since $M(SL_2(5))=1$, $\bar N_1$ cannot be central in $\bar K$. Let $t \in K$ a $2$-element such that $\langle tN_1\rangle=Z(K/N_1)$, namely the unique central involution in $SL_2(5)$ and $\langle tN_1\rangle=O_2(K/N_1)$. Since $N_1$ is an irreducible module over $GF(2)$, we have that $t$ acts trivially on $\bar N_1$. Suppose that $\bar t^2\neq 1$, then $\langle \bar t^2\rangle$ would be a proper, non-trivial submodule of $\bar N_1$, against irreducibility. This means that $\bar t^2=1$ and hence $\langle\bar t\rangle$, that centralizes $\bar N_2$, is a minimal normal subgroup of $\bar N_2$. Observe that $\bar K/\langle t \rangle$ is a quotient of $K$ that satisfies the hypotheses of Lemma \ref{extensions-simple-groups}. Hence by Lemma \ref{realextension} we derive a contradiction.% Since $SL_2(5)$ does not satisfy the hypotheses, we have that $K\cap H<H$.
\end{proof}
We now prove Theorem \ref{theoremA}, that we restate for convenience of the reader. 
\begin{teo}\label{noedgesnonsolvmain}
Let $G$ be a finite non-solvable group and suppose that $cd_{rv}(G)$ consists of prime-power numbers. Then $Rad(G)=H\times O$ for a group $O$ of odd order and a $2$-group $H$ of Chillag-Mann type. Furthermore, if $K=G^{(\infty)}$, then one of the following holds.
\begin{itemize}
\item[i)] $G =K\times R$ and $K$ is isomorphic to $A_5$ or $L_2(8)$;
\item[ii)] $G=(KH)\times O$ where $K\simeq SL_2(5)$, $HK=H\Ydown K$ and $K\cap H=Z(K)<H$. 
\end{itemize}
\end{teo}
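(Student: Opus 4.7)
The plan is to assemble the desired structure from Propositions~\ref{noedgessimple} and \ref{perfect-characterized}. Those propositions give $G=KR$; the subgroup $L:=K\cap R$ is a $2$-group with $|L|\leq 2$; $K/L\simeq A_5$ or $L_2(8)$; and if $|L|=2$ then $K\simeq SL_2(5)$ with $L=Z(K)$. The inclusion $[K,R]\leq L$ obtained in the proof of~\ref{noedgessimple}, together with $L\leq Z(K)$, says that $R$ acts on $K$ by central automorphisms. Since the map $k\mapsto[k,r]$ is then a homomorphism from the perfect group $K$ to $Z(K)$, it is trivial, so $[K,R]=1$ and $G=K\Ydown_L R$ is a central product.

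The central step is to prove that every $\psi\in Irr_{rv}(R)$ is linear; once this holds, Theorem~\ref{chillag-mann-1998-thm1.1} immediately gives $R=H\times O$ with $O$ of odd order and $H$ a $2$-group of Chillag--Mann type, both characteristic in $R$ and therefore normal in $G$. Suppose, aiming at a contradiction, $\psi(1)=p^a>1$, and write $\lambda:=\psi|_L/\psi(1)\in Irr(L)$ for the scalar by which $L$ acts on $\psi$. Under the central-product description, any $\chi\in Irr_{rv}(K)$ with $\chi|_L=\chi(1)\lambda$ produces a real character $\chi\cdot\psi\in Irr_{rv}(G)$ of degree $\chi(1)\psi(1)$, so it suffices to find such a $\chi$ making $\chi(1)\psi(1)$ composite. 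If $\lambda=1_L$, one takes $\chi$ to be the lift of a non-linear real character of $K/L$; the non-trivial real degrees $\{3,4,5\}$ for $A_5$ and $\{7,8,9\}$ for $L_2(8)$ are powers of pairwise distinct primes, so some $\chi(1)>1$ is coprime to $p$. If $\lambda\neq 1_L$, then $|L|=2$ and $K\simeq SL_2(5)$, and one takes $\chi$ to be the real faithful character of $SL_2(5)$ of degree $6$; the product $\chi(1)\psi(1)=6p^a$ has $2$ and $3$ as prime divisors, hence is composite regardless of $p$. Either alternative contradicts the hypothesis on $cd_{rv}(G)$.

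Finally I would split into the two cases. Since $L$ is a $2$-subgroup of $R=H\times O$, $L\leq H$; also $K\cap O\leq L\cap O=1$. If $L=1$, the central product collapses to $G=K\times R$ with $K\simeq A_5$ or $L_2(8)$, giving case~(i). If $|L|=2$, then $L=Z(K)\leq H$, so $KH=K\Ydown H$ with $K\cap H=L=Z(K)$; since $K$ centralizes $R$ and $H$ centralizes $O$, the subgroup $KH$ centralizes $O$, while $KH\cap O=H\cap O=1$ and $|G|=|KR|=|KH||O|$ together force $G=(KH)\times O$. The strict inequality $Z(K)<H$ remains: if $H=Z(K)$ then $KH=K$ and $G=SL_2(5)\times O$, whose degree-$6$ real character inflates to a real character of $G$ of composite degree, contradicting the hypothesis. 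The crux of the whole argument is the Chillag--Mann step for $R$, and inside it the faithful-on-$L$ subcase, which depends on the specific numerics $6=2\cdot 3$ to defeat all possible primes $p$.
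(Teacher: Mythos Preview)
Your proof is correct and somewhat cleaner than the paper's. The key organizational difference is that you establish $[K,R]=1$ at the outset, via the elementary observation that $[K,R]\le L\le Z(K)$ together with the fact that a perfect group admits no nontrivial homomorphism into its centre; this immediately yields the central-product decomposition $G=K\Ydown_L R$ and lets you prove in one stroke that $R$ itself is of Chillag--Mann type by manufacturing composite-degree real characters $\chi\cdot\psi$ on $G$. The paper instead passes to $\bar G=G/Z$ first: it shows $\bar R$ is Chillag--Mann (reusing the argument from Proposition~\ref{noedgessimple}), then needs a separate coprime-action step to upgrade $R=H\rtimes O$ to $R=H\times O$, proves $[H,K]=1$ by yet another argument, and only afterwards shows $H$ (not just $H/Z$) is Chillag--Mann using the degree-$6$ character. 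Both arguments ultimately rest on the same numerical facts about the real degrees of $A_5$, $L_2(8)$ and $SL_2(5)$ and finish with the same $Z(K)<H$ observation, but your ordering eliminates the detour through $R/Z$ and the coprime-action lemma.
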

\begin{proof} 
By Proposition \ref{perfect-characterized} and Proposition \ref{noedgessimple}, if $K=G^{(\infty)}$ and $R=Rad(G)$, then $G=KR$, and either $K\cap R=1$ and $K$ is simple isomorphic to $A_5$ or $L_2(8)$ or $K\simeq SL_2(5)$ and $K\cap R=Z(K)$. In the first case, $i)$ follows. Suppose $K=SL_2(5)$ and $K\cap R=Z(K)=Z$. Note that $Z$ is a normal subgroup of order $2$, hence is central in $R$. Consider $\bar G=G/Z$. Then $\bar G=\bar K \times \bar R$ and hence $\bar R$ is a group of Chillag Mann type, since $\bar K$ is simple and has irreducible real non-linear characters of both odd and even degree. This means that $\bar R=\bar O \times \bar H$ for $O \in Hall_{2'}(R)$ and $H \in Syl_2(R)$, note that $\bar H$ is of Chillag-Mann type. We have that $R$ is $2$-closed, hence $R=H \rtimes O$. Clearly, $O$ acts trivially on $H/Z$, so $H=C_H(O)Z\le C_H(O) Z(R)\cap H$. It follows that $O$ centralizes $H$ and $R=H\times O$. By Dedekind modular law $HK\cap O\le HK\cap R\le H(K\cap R)\le H$ and hence $HK\cap O\le H\cap O=1$. This means that $G=(KH)\times O$ and $K\cap H=Z$ that has order $2$. If $H$ and $K$ commute, then $KH=K\Ydown H$. Suppose by contradiction that $[H,K]=Z$, hence there is $aZ \in H/Z$ that acts non-trivially by conjugation on $K/Z$. But this is impossible since $KH/Z=\bar K\times \bar H$. We now prove that $H$ is of Chillag-Mann type. Suppose that this is not the case, so there is $\phi \in Irr_{rv}(H)$ such that $\phi(1)>1$. Since $\bar H$ is of Chillag-Mann type, we have that $ Z \not \le \ker{\phi}$, so $\phi_Z=\phi(1)\lambda$, with $\lambda \neq 1_Z$. On the other hand, if $\theta$ is the unique character of $K$ of degree $6$, then $Z \not \le \ker{\theta}$ and $\theta_Z=\theta(1)\lambda$. Now, $KH=K\times H/N$ where $N=\{(z,z)\mid z \in Z\}$ (see \cite[I9.10]{huppertI}) and $\psi=\theta\times \phi \in Irr_{rv}(K\times H)$. Moreover $\psi_N=\phi(1)\theta(1)\lambda^2=\phi(1)\theta(1)1_N$, so it follows that $N \le \ker{\psi}$ and $\psi \in Irr_{rv}(KH)$. If $\chi=\psi\times1_O$, then $\chi \in Irr(G)$ takes real values and has composite degree, impossible. Since $SL_2(5)$ does not satisfy the hypotheses, we have that $Z<H$. Point $ii)$ follows.
\end{proof}
We remark that in Problem $4.4$ of \cite{ctfg}, we can find a stronger version of the argument used in the Proof above.

	As a consequence, we get Theorem \ref{theoremB}.
	\begin{cor} \label{nonsolvrealdegs}
		Let $G$ a non-solvable group and suppose that $cd_{rv}(G)$ consists of prime-power numbers. Then either $cd_{rv}(G)=cd_{rv}(L_2(8))$ or $cd_{rv,2'}(G)=cd_{rv,2'}(A_5)$.
	\end{cor}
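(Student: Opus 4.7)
The plan is to invoke Theorem \ref{theoremA} to reduce to the two structural cases and then read off $cd_{rv}(G)$ from the known character tables of $A_5$, $L_2(8)$ and $SL_2(5)$. The common starting observation is that $Rad(G)=H\times O$ contributes only linear real characters: $O$ admits no nontrivial real irreducible character (odd order), and $H$ being of Chillag-Mann type has $cd_{rv}(H)=\{1\}$ by definition.

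In case $i)$, where $G=K\times Rad(G)$, every real irreducible character of $G$ factorises as $\chi=\alpha\times\beta$ with $\alpha\in Irr_{rv}(K)$ and $\beta\in Irr_{rv}(Rad(G))$ linear, so $\chi(1)=\alpha(1)$ and hence $cd_{rv}(G)=cd_{rv}(K)$. If $K\simeq L_2(8)$, then all irreducible characters of $K$ are real-valued, and the first alternative of the Corollary follows directly; if $K\simeq A_5$, then $cd_{rv}(G)=\{1,3,4,5\}$, so in particular $cd_{rv,2'}(G)=cd_{rv,2'}(A_5)=\{1,3,5\}$.

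Case $ii)$, where $G=(KH)\times O$ with $K\simeq SL_2(5)$ and $K\cap H=Z(K)=Z$, requires a little central-product bookkeeping. I would view $KH\simeq (K\times H)/N$ with $N=\{(z,z):z\in Z\}$, so that $Irr(KH)$ corresponds to those pairs $(\alpha,\beta)\in Irr(K)\times Irr(H)$ whose restrictions to $Z$ share a common central character. For a real character $\chi$ of $G$ of odd degree, the $O$-factor contributes only the trivial character, and inside $KH$ one must have $\beta(1)=1$ (since $H$ is a $2$-group) together with $\alpha(1)$ odd. The crucial point is that every odd-degree irreducible character of $SL_2(5)$ is inflated from $A_5$: the faithful characters of $SL_2(5)$ have degrees $2,2,4,6$. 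Hence such an $\alpha$ is trivial on $Z$, the compatibility condition forces the real linear character $\beta$ to be trivial on $Z$ as well, and $\chi(1)=\alpha(1)$ lies in $cd_{rv,2'}(A_5)=\{1,3,5\}$. The reverse inclusion is obtained by taking $\beta=1_H$, giving $cd_{rv,2'}(G)=cd_{rv,2'}(A_5)$.

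I do not anticipate a serious obstacle: once Theorem \ref{theoremA} is available, the statement is essentially a character-table verification. The only mild care needed is in case $ii)$, where one has to track central characters on $Z$; this is harmless precisely because the faithful characters of $SL_2(5)$ all have even degree and therefore cannot contribute to $cd_{rv,2'}(G)$.
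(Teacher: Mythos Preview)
Your proposal is correct and follows the same outline as the paper: invoke Theorem~\ref{theoremA}, dispose of case~$i)$ by noting that $Rad(G)$ contributes only linear real characters, and in case~$ii)$ show that the odd real degrees of $G$ are exactly $\{1,3,5\}$.

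The only notable difference is how case~$ii)$ is handled. You use the explicit central-product description $KH\simeq (K\times H)/N$ together with the character table of $SL_2(5)$: since the faithful irreducible characters of $SL_2(5)$ have degrees $2,2,4,6$, any odd-degree $\alpha\in Irr(K)$ is inflated from $A_5$, and the compatibility condition on $Z$ then forces $\beta$ to be trivial on $Z$ as well. The paper instead argues more abstractly: for $\chi\in Irr_{rv}(G)$ of odd degree it uses Lemma~\ref{dps1.4} to kill $O$, observes that the constituents of $\chi_H$ are linear (as $(|H|,\chi(1))=1$), and then applies \cite[11.29]{ctfg} to conclude that $\chi(1)$ divides $[KH:H]=|A_5|=60$, whence $\chi(1)\in\{1,3,5\}$. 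Your route is a shade more explicit and leans on knowing the $SL_2(5)$ table; the paper's route avoids that table but imports a general divisibility lemma. Both reach the reverse inclusion the same way, via $A_5$ being a quotient of $G$.
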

	\begin{proof} Apply Theorem \ref{noedgesnonsolvmain}. In case $i)$ there is nothing to prove. Suppose $ii)$, we have that $G=(KH)\times O$ with $O$ of odd order, $K=G^{(\infty)}$ and $H$ is a normal $2$-subgroup. Call $S$ the simple section $KH/H$, hence $S\simeq A_5$. Take $\chi \in Irr_{rv}(G)$ a real non-linear character of odd degree. Hence $\chi(1)=p^n$ with $p$ odd and $\chi$ is a character of $HK$ since, by Lemma \ref{dps1.4}, $O \le \ker (\chi)$. The degree of every irreducible constituent of $\chi_H$ divides $(\abs{H},\chi(1))=1$, hence $\chi_H=e\sum_i \lambda_i$ for $\lambda_i \in Lin(H)$. By hypothesis we have that $
	\chi(1)=p^*>1$ for an odd prime $p$ and by \cite[11.29]{ctfg} we have that $\chi(1)/\lambda(1)$ divides $[HK:H]=\abs{S}$, where $S\simeq A_5$. Hence $p\le \chi(1) \le \abs{S}_p$, the $p$-part of the number $\abs{S}$, that is equal to $p$ if $p$ is an odd prime. It follows that $\chi(1)=p$. We have proved that $cd_{rv,2'}(G) \subseteq \{3,5\}=cd_{rv,2'}(A_5)$. The right-to-left inclusion follows observing that $A_5$ is a quotient of $G$.% If $(S,p) \neq (PSL_2(8),3)$, then $\abs{S}_p=p$ and hence $\chi(1)=p$. Suppose $(S,p) = (PSL_2(8),3)$, in this case $\abs{S}_3=9$. The simple group $S$ has a real character of degree $9$, hence, if $\chi(1)=3^2$, we are done. Suppose that $\chi(1)=3$, we have that $\chi_H=e(\lambda_1+\lambda_2+\lambda_3)$ and $G$ permutes the $\lambda_i$. The group $H$ is contained in the kernel of the action, hence we have a non-trivial morphism from $KH/H \to S_3$. But $KH/H$ a non-abelian simple group and this is impossible.
	\end{proof}
	
\end{document}